\theoremstyle{plain}
\newtheorem{theorem}{Theorem}[section]
\newtheorem{lemma}[theorem]{Lemma}
\newtheorem{proposition}[theorem]{Proposition}
\newtheorem{corollary}[theorem]{Corollary}
\newtheorem{conjecture}[theorem]{Conjecture}
\theoremstyle{definition}
\newtheorem{definition}[theorem]{Definition}
\newtheorem{example}[theorem]{Example}
\theoremstyle{remark}
\newtheorem{remark}[theorem]{Remark}
\newtheorem*{ack}{Acknowledgement}
\def\ACVF{\text{ACVF}}
\def\VF{\mathrm{VF}}
\def\loc{\mathrm{loc}}
\def\sr{\mathrm{sr}}
\def\RV{\mathrm{RV}}
\def\RES{\mathrm{RES}}
\def\HL{\mathrm{HL}}
\def\hl{\mathfrak{\mathtt{H}\!\mathtt{L}}}
\def\rv{\mathrm{rv}}
\def\val{\mathrm{val}}
\def\Var{\mathrm{Var}}
\def\sp{\mathrm{sp}}
\def\Gr{\mathrm{Gr}}
\def\ord{\mathrm{ord}}
\def\Jac{\mathrm{Jac}}
\def\alg{\text{alg}}
\def\Spec{\mathrm{Spec}}
\def\bdd{\mathrm{bdd}}
\def\vol{\mathrm{vol}}
\def\Ob{\mathrm{Ob}}
\def\Rig{\text{Rig}}
\def\sp{\mathrm{sp}}
\def\fin{\text{fin}}
\def\SL{\text{SL}}
\def\Gr{\mathrm{Gr}}
\def\gcd{\mathrm{gcd}}
\def\Hom{\mathrm{Hom}}
\def\GBSRig{\mathrm{GBSRig}}
\def\BSRig{\mathrm{BSRig}}
\def\SE{\mathfrak{\mathtt{M}\!\mathtt{V}}}
\def\ka{\kappa}
\def\k{\mathbf{k}}
\def\t{\mathbf{t}}
\def\x{\mathbf{x}}
\def\L{\mathbb{L}}
\title[The integral identity conjecture]{\bf Proofs of the integral identity conjecture over algebraically closed fields}  
\author{L\^e Quy Thuong}
\dedicatory{\it Dedicated to Professor Nguy$\tilde{\text{\^e}}$n H. V. Hu\!{\scriptsize\textquoteright}ng on the occasion of his sixtieth birthday}
\address{Institut de Math\'ematiques de Jussieu, UMR 7586 CNRS, 4 place Jussieu, 75005 Paris, France {\rm(current)}}
\email{leqthuong@math.jussieu.fr}
\address{Department of Mathematics, Vietnam National University, 334 Nguyen Trai Street, Hanoi, Vietnam}
\email{thuonglq@vnu.edu.vn}
\keywords{definable sets, formal schemes, geometric motivic integration, motivic Milnor fiber, N\'eron smoothening, resolution of singularities, rigid varieties, motivic volume}
\subjclass[2010]{03C60, 14B20, 14E18, 14G22, 32S45, 11S80}
\begin{document}           
\begin{abstract}
Recently, it is well known that the conjectural integral identity is of crucial importance in the motivic Donaldson-Thomas invariants theory for non-commutative Calabi-Yau threefolds. The purpose of this article is to consider different versions of the identity, for regular functions and formal functions, and to give them the positive answer for the ground field algebraically closed. Technically, the result on motivic Milnor fiber by Hrushovski-Loeser using Hrushovski-Kazhdan's motivic integration and Nicaise's computations on motivic integrals on special formal schemes are main tools.
\end{abstract}
\maketitle                 

\section{Introduction}\label{introduction}
\subsection{}\label{firstsub}
Throughout this article, $\ka$ will be a field of characteristic zero. For $m\geq 1$, let $\mu_m$ denote $\Spec\Big(\ka[t]/(t^m-1)\Big)$, the group scheme of $m$th roots of unity, and let $\hat{\mu}$ be the limit of the projective system $(\mu_m)_{m\geq 1}$ whose transition morphisms $\mu_{mn}\to\mu_m$ are given by $s\mapsto s^n$. An (algebraic) $\ka$-variety is a separated, reduced $\ka$-scheme of finite type; if $\mathscr S$ is a $\ka$-variety then by an $\mathscr S$-variety we mean an algebraic $\ka$-variety $\mathscr X$ together with a morphism $\mathscr X\to \mathscr S$. As in \cite{DL4}, let $\Var_{\mathscr S}$ be the category of $\mathscr S$-varieties, $K(\Var_{\mathscr S})$ its Grothendieck ring, and $\mathscr M_{\mathscr S}$ the localization of $K(\Var_{\mathscr S})$ at $[\mathbb A^1]:=[\mathbb A_{\mathscr S}^1]$. Also by \cite{DL4}, a {\it good} $\mu_m$-action on an $\mathscr S$-variety $\mathscr X$ is a group action $\mu_m\times \mathscr X \to \mathscr X$ which is a morphism of $\mathscr S$-varieties each of whose orbits is contained in an affine subvariety of $\mathscr X$, and a {\it good} $\hat{\mu}$-action on $\mathscr X$ is an action of $\hat{\mu}$ on $\mathscr X$ factoring through some good $\mu_m$-action. One can also consider the category of $\mathscr S$-varieties endowed with good $\hat{\mu}$-action and its Grothendieck ring $\mathscr M_{\mathscr S}^{\hat{\mu}}$ (cf. \cite{DL4}). In the sequel, the rings $\mathscr M_{\Spec(\ka)}$ and $\mathscr M_{\Spec(\ka)}^{\hat{\mu}}$ will be rewritten simply by $\mathscr M_{\ka}$ and $\mathscr M_{\ka}^{\hat{\mu}}$, respectively. 

If $\mathscr M$ is one of the previous Grothendieck rings, or $\mathscr M$ is the Grothendieck ring $!K(\RES)[[\mathbb A_{\ka}^1]^{-1}]$ in Section \ref{prepare10}, we shall denote by $\mathscr M_{\loc}$ the localization of $\mathscr M$ with respect to the multiplicative family generated by the elements $1-[\mathbb A^1]^i$, $i\geq 1$. We shall also write $\loc$ for the localization morphism $\mathscr M\to \mathscr M_{\loc}$. Moreover, if $\mathscr N$ is the previous $\mathscr M$ or $\mathscr M_{\loc}$, we denote by $\mathscr N[[T]]_{\sr}$ the sub-$\mathscr N$-module of $\mathscr N[[T]]$ generated by 1 and by finite products of terms $[\mathbb A^1]^eT^i/(1-[\mathbb A^1]^eT^i)$ with $e\in\mathbb{Z}$, $i\in\mathbb{N}_{>0}$. An element of $\mathscr N[[T]]_{\sr}$ is called a {\it rational} function in $T$. As shown in \cite{DL1}, there is a unique $\mathscr N$-linear morphism $\lim_{T\to\infty}: \mathscr N[[T]]_{\sr}\to \mathscr N$ such that
$$\lim_{T\to\infty}\left(\frac{[\mathbb{A}^1]^eT^i}{1-[\mathbb{A}^1]^eT^i}\right)=-1.$$

\subsection{}
Starting from the year 1995 with a talk by Kontsevich at Orsay \cite{Kont}, and the strong developments by Batyrev \cite{Ba1,Ba2}, Denef-Loeser \cite{DL2,DL3}, Looijenga \cite{Loo}, Sebag \cite{Se}, Loeser-Sebag \cite{LS}, Nicaise-Sebag \cite{NS, NS2}, Nicaise \cite{Ni2}, Hrushovski-Kazhdan \cite{HK,HK2}, the geometric motivic integration has risen as a power tool in the study of algebro-geometric objects over $\ka$. The original idea is to relate to a $\ka$-variety $\mathscr X$ the arc space $\mathcal L(\mathscr X)$ on which the motivic measure takes values in a completion of $\mathscr{M}_{\ka}$.

If $\mathscr Y\to \mathscr X$ is a resolution of singularities, the induced morphism $\mathcal L(\mathscr Y)\to \mathcal L(\mathscr X)$ is a bijection outside negligible subsets. It gives rise to the fundamental formula of change of variables which expresses the motivic integral on $\mathcal L(\mathscr X)$ in terms of that on $\mathcal L(\mathscr Y)$. By this formula, Kontsevich showed in his Orsay talk \cite{Kont} that two $K$-equivalent $n$-dimensional smooth proper complex varieties (e.g., two birationally equivalent complex Calabi-Yau varieties) have the same Betti numbers (in fact the same Hodge numbers). 

Motivic zeta function, motivic nearby cycles and motivic Milnor fiber are defined by Denef-Loeser \cite{DL1} in the way using motivic integration. Let $f$ be a regular function on an algebraic $\ka$-variety $\mathscr X$ of pure dimension $d$ with the zero locus $\mathscr X_0$. Put
$$\mathscr X_m(f):=\left\{\varphi\in\mathcal{L}_m(\mathscr X)\mid f(\varphi)=t^m\mod t^{m+1}\right\},$$ 
let it be endowed with a $\mu_m$-action given by $s\cdot\varphi(t)=\varphi(st)$, thus the $\mu_m$-equivariant morphism $\varphi(t)\mapsto\varphi(0)$ defines an element $[\mathscr{X}_m(f)]$ in $\mathscr{M}_{\mathscr X_0}^{\hat{\mu}}$. Then the motivic zeta function $Z_f(T)$ of $f$ is defined as follows
$$
Z_f(T):=\sum_{m\geq 1}[\mathscr{X}_m(f)][\mathbb{A}_{\mathscr X_0}^1]^{-md}T^m\ \in \mathscr M_{\mathscr X_0}^{\hat{\mu}}[[T]]$$
It is proved in \cite{DL1} that $Z_f(T)$ is a rational function, and the limit $-\lim_{T\to \infty}Z_f(T)$ in $\mathscr M_{\mathscr X_0}^{\hat{\mu}}$, denoted by $\mathcal S_f$, is called the {\it motivic nearby cycles} of $f$. In the same way, for a closed point $\x$ of $\mathscr X_0$, we put $\mathscr X_{\x,m}(f):=\{\varphi\in \mathscr X_m(f)\mid \varphi(0)=\x\}$ and consider the motivic zeta function
$$Z_{f,\x}(T):=\sum_{m\geq 1}[\mathscr{X}_{\x,m}(f)][\mathbb{A}_{\ka}^1]^{-md}T^m\ \in \mathscr M_{\ka}^{\hat{\mu}}[[T]].$$
Again by \cite{DL1}, $Z_{f,\x}(T)$ is a rational function, and the limit $\mathcal S_{f,\x}:=-\lim_{T\to \infty}Z_{f,\x}(T)$ is the {\it motivic Milnor fiber of $f$ at $\x$}. Equivalently, we have $\mathcal S_{f,\x}=(\{\x\}\hookrightarrow \mathscr X_0)^*\mathcal S_f$. 

Explicitly, it is shown in \cite{DL1} that
\begin{equation}\label{c1:eq2}
\mathcal{S}_f=\sum_{\emptyset\not=I\subset J} (1-[\mathbb A_{\mathscr X_0}^1])^{|I|-1}[\tilde{E}_I^{\circ}],
\end{equation}
where the classes $[\tilde{E}_I^{\circ}]$ are built by Denef-Loeser \cite{DL1} from data of a resolution of singularities of $(\mathscr X,\mathscr X_0)$. Also due to \cite{DL1}, the previous formula (\ref{c1:eq2}) is independent of the choice of resolution of singularities.

Now let us go to the formal context. Let $\mathfrak X$ be a special formal $\ka[[t]]$-scheme with structural morphism $\mathfrak f$ and with reduction $\mathfrak X_0$. By \cite{Tem}, there exists a resolution of singularities of the formal scheme $(\mathfrak X, \mathfrak X_0)$, so by using the formula (\ref{c1:eq2}), Kontsevich-Soibelman in \cite{KS} defined motivic nearby cycles $\mathcal{S}_{\mathfrak f}\in \mathscr M_{\mathfrak X_0}^{\hat{\mu}}$ and motivic Milnor fiber $\mathcal{S}_{\mathfrak f, \x}\in \mathscr M_{\ka}^{\hat{\mu}}$. As explained in Subsection \ref{tetdenroi}, if $\ka$ is algebraically closed, these are independent of the uniformizing parameter $t$. By Subsection \ref{mncs}, $\mathcal{S}_{\mathfrak f}$ (hence $\mathcal{S}_{\mathfrak f,\x}$) is independent of the choice of the resolution of singularities. Remark that this approach also definitely agrees with Nicaise-Sebag's formula of the analytic Milnor fiber \cite{NS}, with Nicaise's formula of the motivic volume \cite{Ni2}, and with Jiang's definition of the motivic Milnor fiber of a cyclic $L_{\infty}$-algebra \cite{Ji}.  

\subsection{}
In \cite{KS}, Kontsevich and Soibelman introduced in an important way the concept of motivic Donaldson-Thomas invariants in the framework of non-commutative Calabi-Yau threefolds over a field of characteristic zero. Among others, the derived category of coherent sheaves on a compact/local Calabi-Yau threefolds is a central object of the investigation. The approach of Kontsevich-Soibelman \cite{KS} is to use motivic Milnor fiber instead of topological Milnor fiber as for the classical Donalson-Thomas invariants theory. In particular, To\"{e}n \cite{To} developed knowledges of the derived Hall algebra, and based on this, Kontsevich-Soibelman \cite{KS} studied the motivic Hall algebra.

The foundation of Kontsevich and Soibelman's theory includes the integral identity conjectured in \cite[Conj. 4.4]{KS}. The role of this identity is crucial, because, shortly speaking, the existence of the motivic Donaldson-Thomas invariants would be suspended if the integral identity were not proved. 

Here is the statement of the integral identity conjecture. Fix a system of coordinates $(x,y,z)$ of the $\ka$-vector space $\ka^d=\ka^{d_1}\times \ka^{d_2}\times \ka^{d_3}$. 

\begin{conjecture}[Kontsevich-Soibelman \cite{KS}, Conj. 4.4]\label{conj1}
Let $f$ be in $\ka[[x,y,z]]$ invariant by the natural $\ka^{\times}$-action of weight $(1,-1,0)$, with $f(0,0,0)=0$. Let $\mathfrak X$ be the formal completion of $\mathbb A_{\ka}^d$ along $\mathbb A_{\ka}^{d_1}$ with structural morphism $f_{\mathfrak X}$ induced by $f$, let $\mathfrak Z$ be the formal completion of $\mathbb A_{\ka}^{d_3}$ at the origin with structural morphism $f_{\mathfrak Z}$ induced by $f(0,0,z)$. Then the integral identity 
$$\int_{\mathbb A_{\ka}^{d_1}}\mathcal{S}_{f_{\mathfrak X}}=[\mathbb A_{\ka}^1]^{d_1}\mathcal{S}_{f_{\mathfrak Z}}$$ 
holds in $\mathscr M_{\ka}^{\hat{\mu}}$. Here $\int_{\mathbb A_{\ka}^{d_1}}$ denotes the forgetful morphism $\mathscr M_{\mathbb A_{\ka}^{d_1}}^{\hat{\mu}}\to \mathscr M_{\ka}^{\hat{\mu}}$.
\end{conjecture}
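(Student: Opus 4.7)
The plan is to translate the integral identity into an equality of motivic volumes of definable subsets of the maximal ideal in an algebraically closed valued field extension of $\ka((t))$, then exploit the $\ka^{\times}$-symmetry of weight $(1,-1,0)$ to reduce the left-hand side to the locus $\{y=0\}$, on which by invariance $f$ depends only on $z$.

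As a first step, I would use the Hrushovski--Loeser description of motivic nearby cycles via the Hrushovski--Kazhdan motivic integral (for the algebraic version) combined with Nicaise's formula for motivic volumes of special formal schemes (to pass to the formal setting), to rewrite
\begin{equation*}
\int_{\mathbb A^{d_1}_{\ka}}\mathcal S_{f_{\mathfrak X}} = \vol\bigl(\Omega_{\mathfrak X}\bigr)\quad\text{and}\quad \mathcal S_{f_{\mathfrak Z}} = \vol\bigl(\Omega_{\mathfrak Z}\bigr)
\end{equation*}
in $\mathscr M_{\ka}^{\hat\mu}$, where
\begin{equation*}
\Omega_{\mathfrak X} = \bigl\{(x,y,z)\in\O^{d_1}\times\mathfrak m^{d_2}\times\mathfrak m^{d_3} : \val f = 1,\ \ac f = 1\bigr\},
\end{equation*}
and $\Omega_{\mathfrak Z}\subset\mathfrak m^{d_3}$ is the analogous set for $f(0,0,z)$; here $\O$ is the valuation ring, $\mathfrak m$ is its maximal ideal, the $\hat\mu$-equivariance is recorded by the angular-component map $\ac$, and the forgetful morphism $\int_{\mathbb A^{d_1}_\ka}$ corresponds to HK-integrating over the $x$-variables. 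Since $\vol(\O^{d_1})=[\mathbb A^1]^{d_1}$, the statement reduces to
\begin{equation*}
\vol\bigl(\Omega_{\mathfrak X}\bigr) = \vol\bigl(\O^{d_1}\times\{0\}\times\Omega_{\mathfrak Z}\bigr).
\end{equation*}

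For the second step, I would stratify $\Omega_{\mathfrak X}$ by $\beta:=\min_j\val(y_j)\in\mathbb Z_{\geq 1}\cup\{\infty\}$. The stratum $\beta=\infty$ is exactly $\O^{d_1}\times\{0\}\times\Omega_{\mathfrak Z}$, because the weight-$(1,-1,0)$ invariance of $f$ forces $f(x,0,z)=f(0,0,z)$, so on this stratum the Milnor-fibre conditions cut out $\Omega_{\mathfrak Z}$ on the $z$-factor with no constraint on $x$. On each stratum $\beta<\infty$, the scaling $(x,y,z)\mapsto(\lambda x,\lambda^{-1}y,z)$ gives a definable action of a nontrivial piece of the multiplicative group which is free and preserves both $f$ and the conditions $\val f=1$, $\ac f=1$; a Fubini decomposition in the HK integral then expresses the volume of the stratum as an orbit integral whose fibre factor vanishes in $\mathscr M_{\ka}^{\hat\mu}$ once the $\hat\mu$-twist is reassembled, so the strata with $\beta<\infty$ contribute zero in total.

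The main obstacle will be the rigorous vanishing of the $\beta<\infty$ contributions: one has to realise the weight-$(1,-1,0)$ scaling as a definable, volume-preserving transformation in the Hrushovski--Kazhdan category, track the $\hat\mu$-equivariance through the angular-component map, and justify a Fubini argument whose fibre integrals collapse in $\mathscr M_{\ka}^{\hat\mu}$; the bookkeeping of Galois twists and of the value-group strata is the technical heart of the argument. A secondary difficulty is matching Hrushovski--Loeser's original theorem, stated for regular functions on algebraic varieties, to the formal setting of $\mathfrak X$ and $f_{\mathfrak X}$; here Nicaise's formula for the motivic volume of a special formal scheme, together with the uniformiser-independence discussed in Subsection~\ref{tetdenroi} and the resolution-independence in Subsection~\ref{mncs}, supplies the required bridge.
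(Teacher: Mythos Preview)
Your outline is essentially the paper's own strategy, but be aware that the paper does \emph{not} prove Conjecture~\ref{conj1} as stated: it proves only Theorem~\ref{full}, i.e., the identity after applying the localization morphism $\loc:\mathscr M_{\ka}^{\hat\mu}\to\mathscr M_{\ka,\loc}^{\hat\mu}$ and under the extra hypothesis that $\ka$ is algebraically closed. Your proposal, correctly executed, cannot do better. The Hrushovski--Loeser comparison (Proposition~\ref{eq010}, Theorem~\ref{comp}) and the morphism $\hl$ take values in $\mathscr M_{\ka,\loc}^{\hat\mu}$, and the crucial vanishing step --- that the orbit factor, a half-open annulus, has trivial image --- is Lemma~\ref{cachan5}, an identity for $\hl$, hence valid only after localization. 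Your assertion that the fibre integrals ``collapse in $\mathscr M_{\ka}^{\hat\mu}$'' is precisely the point that remains open; the remark closing Subsection~\ref{x1} notes that the full conjecture is equivalent to $\SE([X_1])=0$ in $\mathscr M_{\ka}^{\hat\mu}$, which is not established. The algebraically-closed hypothesis on $\ka$ is likewise needed for the $\ACVF(0,0)$ machinery over $\ka((t))$ and is not stated in your proposal.

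A minor point on your decomposition: you stratify by $\beta=\min_j\val(y_j)$ and treat all of $\{y\neq 0\}$ via the scaling action, but on the slice $x=0,\ y\neq 0$ the orbit is not a finite-modulus annulus (the constraint $\lambda x\in\O^{d_1}$ is vacuous when $x=0$), so your free-action argument does not apply there uniformly. The paper instead separates $X_0=\{x=0\text{ or }y=0\}$ from $X_1=\{x\neq 0\text{ and }y\neq 0\}$, handles $X_0$ by a direct computation (Subsection~\ref{sub43} or~\ref{x0}), and on $X_1$ partitions along $\gamma=\val(x)+\val(y)$ so that each orbit is the half-open annulus $A_{\gamma}=\{u\in\VF:0\leq\val(u)<\gamma\}$; the Fubini step you allude to is made precise in Lemma~\ref{cachan6}, and the annulus vanishing is Lemma~\ref{cachan5}.
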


\subsection{}
First, let us study the regular version of Conjecture \ref{conj1}, that is for a regular function $f$. This version was already considered in \cite{Thuong}, it was verified when $f$ is either a composition of a polynomial in two variables with a pair of regular functions with no variable in common, or of Steenbrink type. More precisely, the conjecture holds in the valid scope of formulas of Guibert-Loeser-Merle \cite{GLM1,GLM2}, under certain additional conditions. Herein, we continue the work in the context of localization and under the condition that $\ka$ is algebraically closed. 

\begin{theorem}\label{conj}
Let $f$ be in $\ka[x,y,z]$ invariant by the natural $\ka^{\times}$-action of weight $(1,-1,0)$, and $f(0,0,0)=0$. Let $i$ denote the inclusion of $\mathbb{A}_{\kappa}^{d_1}$ in $\mathscr X_0$. If $\ka$ is an algebraically closed field, then the elements $\int_{\mathbb{A}_{\kappa}^{d_1}}i^*\mathcal{S}_f$ and $[\mathbb{A}_{\ka}^1]^{d_1}\mathcal{S}_{f|_{\mathbb{A}_{\ka}^{d_3}},0}$ of $\mathscr{M}_{\ka}^{\hat{\mu}}$ have the same image in $\mathscr{M}_{\ka,\loc}^{\hat{\mu}}$ by localization. 
\end{theorem}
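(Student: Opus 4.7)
First I would show that any truncated arc $\varphi\in\mathcal L_m(\mathbb A^d_\kappa)$ with $\varphi(0)\in\mathbb A^{d_1}_\kappa$ factors through the formal completion $\mathfrak X$ of $\mathbb A^d_\kappa$ along $\mathbb A^{d_1}_\kappa$, and the congruence $f(\varphi)\equiv t^m\pmod{t^{m+1}}$ only depends on the image of $f$ in the $(y,z)$-adic completion. The restriction of the motivic zeta function $Z_f(T)$ over $\mathbb A^{d_1}_\kappa$ therefore coincides with $Z_{f_{\mathfrak X}}(T)$; after passing to the limit $T\to\infty$ this yields $i^*\mathcal{S}_f=\mathcal{S}_{f_{\mathfrak X}}$ in $\mathscr M^{\hat\mu}_{\mathbb A^{d_1}_\kappa}$, so Theorem \ref{conj} is reduced to the localized form of Conjecture \ref{conj1} for the pair $(\mathfrak X,\mathfrak Z)$.

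\textbf{Step 2 (motivic-volume translation and the $\mathbb G_m$-symmetry).} By the Hrushovski--Loeser theorem, $\mathcal{S}_{f_{\mathfrak X}}$ and $\mathcal{S}_{f_{\mathfrak Z}}$ are represented as Hrushovski--Kazhdan motivic volumes of ACVF-definable subsets of the generic fibers of $\mathfrak X$ and $\mathfrak Z$; the forgetful morphism $\int_{\mathbb A^{d_1}_\kappa}$ translates to dropping the residue constraint on the $x$-coordinate while preserving $\hat\mu$-equivariance. The heart of the proof is a change of variables exploiting the weight-$(1,-1,0)$ invariance $f(\lambda x,\lambda^{-1}y,z)=f(x,y,z)$ for $\lambda\in\VF^\times$. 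Partitioning
$$\O^{d_1}=\mathfrak m^{d_1}\sqcup\bigsqcup_{i=1}^{d_1}\bigl\{x\in\O^{d_1}:\val x_i=0,\ \val x_j>0\text{ for }j<i\bigr\},$$
on each open piece I substitute $\lambda=x_i^{-1}$, which normalizes the $i$-th coordinate to $1$ and absorbs the $y$-direction of the $\VF^\times$-orbit; since the Jacobian has zero valuation, the open-piece contribution reduces, after accounting for the motivic volumes of $\O^\times$, $\O$ and $\mathfrak m$ in the appropriate factors, to $[\mathbb A^1_\kappa]^{d_1}$ times the Hrushovski--Kazhdan volume computing $\mathcal{S}_{f_{\mathfrak Z}}$.

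\textbf{Step 3 (vanishing of the closed piece; main obstacle).} The remaining contribution from $\mathfrak m^{d_1}$ (where every $x_i\in\mathfrak m$) is a motivic integral on a special formal subscheme of $\mathfrak X$ of strictly smaller residue dimension. By Nicaise's explicit formulas for motivic integrals on special formal schemes via N\'eron smoothening or embedded resolution of singularities, this class is a finite sum of terms of the form $(1-\L^j)\cdot(\cdots)$ with $j\geq 1$; inverting the $1-\L^j$ in $\mathscr M^{\hat\mu}_{\kappa,\loc}$ kills the residual piece, and this is the unique step where localization is used. The main technical obstacle is transporting the $\hat\mu$-action through the multiplicative substitution $y\mapsto x_i y$: the monodromy on $\mathcal{S}_f$ corresponds on the HK side to the Galois action $t\mapsto\zeta t$, and one must verify that it commutes with the substitution and with the residue maps. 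This is precisely where Hrushovski--Loeser's theorem in its equivariant form is indispensable, and where Nicaise's computations provide the explicit dictionary needed to verify the closed-piece vanishing.
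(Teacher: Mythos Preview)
Your decomposition and the paper's are genuinely different, and yours has two real gaps.

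\textbf{On Step 2.} Normalizing $x_i=1$ via $\lambda=x_i^{-1}$ uses up the one-parameter $\VF^\times$-symmetry, but it does \emph{not} reduce $f(x,y,z)$ to $f(0,0,z)$. After the substitution you are left with the condition $\rv\bigl(f(1,x_2',\dots,x_{d_1}',y',z)\bigr)=\rv(t)$, which still depends on the remaining $x'_j$ and on $y'$; the invariance only tells you that $f$ is a polynomial in the products $x_iy_j$ and in $z$, not that it becomes $h(z)$ once one $x$-coordinate is a unit. So the open-piece contribution is the Milnor fiber of a genuinely $(d-1)$-variable function, not $[\mathbb A^1_\kappa]^{d_1}\mathcal S_{h,0}$, and your claimed reduction is unjustified.

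\textbf{On Step 3.} The sentence ``inverting the $1-\L^j$ in $\mathscr M^{\hat\mu}_{\kappa,\loc}$ kills the residual piece'' is a misconception: localizing at $1-\L^j$ makes those elements \emph{units}, it does not annihilate multiples of them. Moreover, the closed piece $\mathfrak m^{d_1}\times\mathfrak m^{d_2}\times\mathfrak m^{d_3}\cap\{\rv f=\rv t\}$ is exactly the set computing $\mathcal S_{f,0}$ at the origin of $\mathbb A^d_\kappa$, and there is no reason this vanishes; in the degenerate case $d_1=0$ your open pieces are empty and the closed piece must carry the whole answer.

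\textbf{What the paper does instead.} The paper splits $X$ not along residual conditions on $x$ but along the $\VF$-condition $(x=0\ \text{or}\ y=0)$ versus $(x\neq 0\ \text{and}\ y\neq 0)$. On $X_0$ one has $f(x,y,z)=h(z)$ exactly, and a direct arc computation with Hadamard products gives $\hl([X_0])=\loc([\mathbb A^1_\kappa]^{d_1}\mathcal S_{h,0})$. On $X_1$ the $\mathbb G_m$-action $\tau\cdot(x,y,z)=(\tau x,\tau^{-1}y,z)$ is \emph{free}, and the key observation is that each orbit intersected with $X_1$ is a half-closed annulus of modulus $\gamma=\val(x)+\val(y)>0$. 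One fibres $X_1$ over $\Gamma_{>0}$ via $\gamma$, shows $[X_{1,\gamma}]=[\tilde X_{1,\gamma}][A_\gamma]$ with $A_\gamma=\{u:0\le\val u<\gamma\}$, computes $\hl([A_\gamma])=0$ by an explicit rational-series limit, and then integrates over $\Gamma_{>0}$ against the o-minimal Euler characteristic. The vanishing of $\hl$ on half-closed annuli is the real engine of the proof, and it is what replaces your incorrect localization step.
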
 

Our proof is strongly inspired from the computations of Hrushovski-Loeser on motivic Milnor fiber \cite{HL} using the Hrushovski-Kazhdan's motivic integration \cite{HK,HK2}. Consider the theory of algebraically closed valued fields of equal characteristic zero $\ACVF(0,0)$ with base structure $\ka((t))$ (see  \cite{HK}). According to \cite{HL}, one can construct a ``natural'' morphism of rings $\hl$ from $K(\vol\VF^{\bdd}[*])$, the Grothendieck ring of bounded $\ka((t))$-definable subsets of $\VF^n\times\RV^{\ell}$'s endowed with a volume form, to $\mathscr{M}_{\ka,\loc}^{\hat{\mu}}$. By this morphism, Theorem \ref{conj} is now equivalent to the claim that $\hl([X])=\hl([X_0])$, where
\begin{align*}
X:=\left\{(x,y,z)\in\VF^d\
\begin{array}{ |l}
\val(x)\geq 0, \val(y)>0, \val(z)>0 \\ 
\rv(f(x,y,z))=\rv(t)
\end{array}
\right\},
\end{align*}
and
$$X_0:=\left\{(x,y,z)\in X \mid x=0\ \text{or}\ y=0\right\}.$$
Here, by definition, $\val(x):=\min_i\{\val(x_i)\}$. Put 
\begin{equation}\label{hehehihi}
X_1:=X-X_0
\end{equation}
and prove that $\hl([X_1])=0$. We partition $X_1$ along the map $X_1\to\Gamma_{>0}$ given by $(x,y,z)\mapsto \val(x)+\val(y)$ into the fibers $X_{1,\gamma}$ over $\gamma\in\Gamma_{>0}$. Fix an algebraic closure $\ka((t))^{\alg}$ of $\ka((t))$. The action of the group $\ka((t))^{\alg}[\tau,\tau^{-1}]$ on $X_{1,\gamma}$ by fixing the sum $\val(x)+\val(y)$ (equal to $\gamma$), namely, $\tau(x,y,z)=(\tau x,\tau^{-1}y,z)$, is a free action. Then the natural projection from $X_{1,\gamma}$ to the quotient space $\tilde{X}_{1,\gamma}$ by this action is a fibration whose fibers are isomorphic as ``half-closed" annuli of modulus $\gamma$. The image of the class of such an annulus under $\hl$ is equal to zero. Then, using appropriate properties of motivic integration, the proof is completed (see Subsection \ref{bbb}).

\subsection{}
We also introduce a proof of the integral identity localized when the field $\ka$ is algebraically closed. The proof is a combination of the theory of geometric motivic integration and the development of the integration theory for valued fields inspired from the model theory.

\begin{theorem}\label{full}
If $\ka$ is algebraically closed, then Conjecture \ref{conj1} is true up to the localization $\loc$ (cf. Subsec. \ref{firstsub}).
\end{theorem}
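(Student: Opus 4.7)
The plan is to mirror the strategy of Theorem \ref{conj}: after translating both sides of the formal-scheme identity into the Hrushovski-Kazhdan/Hrushovski-Loeser framework by means of Nicaise's motivic integration on special formal $\ka[[t]]$-schemes, run the torus-action argument of Subsection 2.2. The essentially new ingredient, compared to the regular case, is a careful treatment of the formal series $f$, which is not a priori a definable function in $\ACVF(0,0)$.

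First, using Nicaise's formula for the motivic volume of a special formal $\ka[[t]]$-scheme equipped with a structural morphism, I would express the two sides as
\begin{equation*}
\loc\!\left(\int_{\mathbb{A}_{\ka}^{d_1}}\! \mathcal S_{f_{\mathfrak X}}\right) = \hl([X]), \qquad \loc\bigl(\mathcal S_{f_{\mathfrak Z}}\bigr) = \hl([Z]),
\end{equation*}
for the bounded definable sets
\begin{equation*}
X = \bigl\{(x,y,z) \in \VF^d : \val(x) \geq 0,\ \val(y) > 0,\ \val(z) > 0,\ \rv(f(x,y,z)) = \rv(t)\bigr\},
\end{equation*}
\begin{equation*}
Z = \bigl\{z \in \VF^{d_3} : \val(z) > 0,\ \rv(f(0,0,z)) = \rv(t)\bigr\}.
\end{equation*}
The conditions $\val(y), \val(z) > 0$ force $f(x,y,z)$ and $f(0,0,z)$ to converge in $\ka((t))^{\alg}$, so these formulas are meaningful pointwise. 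Theorem \ref{full} then reduces to proving $\hl([X]) = [\mathbb{A}_{\ka}^1]^{d_1}\, \hl([Z])$ in $\mathscr M^{\hat\mu}_{\ka, \loc}$.

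Next, apply the argument of Subsection 2.2 essentially verbatim. Decompose $X = X_0 \sqcup X_1$ with $X_0 = \{x = 0\ \text{or}\ y = 0\}$ and stratify $X_1$ by $\gamma := \val(x) + \val(y) \in \Gamma_{>0}$. The weight-$(1,-1,0)$ invariance of $f$ makes the free action $\tau \cdot (x,y,z) = (\tau x, \tau^{-1} y, z)$ of $\ka((t))^{\alg}[\tau, \tau^{-1}]$ preserve $f$ on every fiber $X_{1,\gamma}$, exhibiting the latter as a fibration with fibers half-closed annuli of modulus $\gamma$; the $\hl$-class of such an annulus is zero, so $\hl([X_1]) = 0$ and $\hl([X]) = \hl([X_0])$. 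The invariances $f(x,0,z) = f(0,y,z) = f(0,0,z)$ then allow one to compute $\hl([X_0])$ explicitly via inclusion-exclusion on the subsets $\{y = 0\}$ and $\{x = 0\}$, combined with the standard evaluations of $\hl$ on $\mathcal O^{d_1}$ and $\mathfrak m^{d_2}$, yielding $\hl([X_0]) = [\mathbb{A}_{\ka}^1]^{d_1}\, \hl([Z])$.

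The main obstacle is the definability of $X$ and $Z$ in $\ACVF(0,0)$, since the formal power series $f$ is not an object of this language. To circumvent this, for each integer $N$ I would let $f_N \in \ka[x,y,z]$ be the truncation of $f$ in total degree $\leq N$, which remains invariant under the weight action. The invariance constraint $|\alpha| = |\beta|$ on monomials $x^\alpha y^\beta z^\gamma$ of $f - f_N$ gives $2|\beta| + |\gamma| \geq N+1$, whence
\begin{equation*}
\val\bigl((f - f_N)(x, y, z)\bigr) \geq \tfrac{N+1}{2}\,\min\bigl(\val(y),\,\val(z)\bigr)
\end{equation*}
on $\{\val(x) \geq 0,\ \val(y), \val(z) > 0\}$. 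Hence on the definable subset $X^{(N)} \subset X$ cut out by $\min(\val(y), \val(z)) \geq 2/(N+1)$, the conditions $\rv(f) = \rv(t)$ and $\rv(f_N) = \rv(t)$ coincide, so $X^{(N)}$ is $\ka((t))$-definable and the argument above yields the desired identity for each $f_N$ (agreeing with Theorem \ref{conj} applied to $f_N$). Finally, Nicaise's resolution-based formula for $\mathcal S_{f_{\mathfrak X}}$ expresses it as a finite sum over strata of a log-resolution of $(\mathfrak X, \mathfrak X_0)$, guaranteeing that the contribution of $X \setminus X^{(N)}$ to $\hl([X])$ vanishes in $\mathscr M^{\hat\mu}_{\ka, \loc}$ as $N \to \infty$; passing to this limit completes the proof.
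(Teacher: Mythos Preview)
Your overall plan---decompose $X=X_0\sqcup X_1$, kill $X_1$ by the torus action, and compute $X_0$ by hand---is exactly the paper's, and the difficulty you single out (that the formal series $f$ is not an object of the language of $\ACVF(0,0)$) is the right one. The gap lies in your proposed cure. The ring $\mathscr M_{\ka,\loc}^{\hat{\mu}}$ carries no topology in which the phrase ``the contribution of $X\setminus X^{(N)}$ vanishes as $N\to\infty$'' acquires a meaning, and nothing in the resolution formula for $\mathcal S_{f_{\mathfrak X}}$ forces $\hl([X\setminus X^{(N)}])=0$ for $N$ large: that formula gives a closed expression for the whole, not a comparison with any truncation. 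The argument in your last paragraph is therefore circular---it presupposes that $\hl([X])$ is defined and splits additively along $X^{(N)}$, which is precisely what you are trying to establish. (Minor point: the inequality $\min(\val(y),\val(z))\geq 2/(N+1)$ should be strict, since you need $\val(f-f_N)>1$, not $\geq 1$, to conclude $\rv(f)=\rv(f_N)$.)

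The paper avoids any limit by shifting most of the computation away from $\hl$ and onto Nicaise's motivic volume $\SE$ on bounded smooth rigid varieties, where no $\ACVF$-definability is needed. Corollary~\ref{sosanh} gives $\int_{\mathbb A_{\ka}^{d_1}}\mathcal S_{f_{\mathfrak X}}=[\mathbb A_{\ka}^1]^{d-1}\SE([\mathfrak X_\eta])$ directly, and the $X_0$ piece is then handled entirely inside $\SE$ (Subsection~\ref{x0}): the product decomposition $X_0\cong Y_0\times Z_0$ and the multiplicativity of $\Phi$ (Proposition~\ref{gsr}) yield $[\mathbb A_{\ka}^1]^{d-1}\SE([X_0])=[\mathbb A_{\ka}^1]^{d_1}\mathcal S_{f_{\mathfrak Z}}$ already in $\mathscr M_{\ka}^{\hat{\mu}}$, with no passage to $\hl$ at all. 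Only for the vanishing of $\loc(\SE([X_1]))$ does the paper move to the $\hl$-side, and there the bridge is the comparison Theorem~\ref{comp}/Corollary~\ref{comp11} of Section~\ref{sec66}, which identifies $\loc(\SE([X_1]))$ with $[\mathbb A_{\ka}^1]^{-d}\hl([X_1^{\star}])$ in one step; the torus-action computation of Subsection~\ref{bbb} then applies verbatim to $X_1^{\star}$. In short, the paper replaces your truncation-and-limit step with a single comparison theorem between motivic integration on rigid varieties and the Hrushovski--Kazhdan integral.
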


\begin{remark}
If the localization morphism $\loc$ were injective, Theorem \ref{full} would imply that Conjecture \ref{conj1} is true. At present that $\loc$ is injective is also an open problem. Fortunately, it is clear that Theorem \ref{full} is sufficiently useful for the theory of motivic Donaldson-Thomas invariants \cite{KS}, where Conjecture \ref{conj1} is one of important ingredients, because the setting of the theory is among $\mathscr{M}_{\ka,\loc}^{\hat{\mu}}$, $\mathscr{M}_{\ka,\loc}^{\hat{\mu}}[[\mathbb A_{\ka}^1]^{1/2}]$ or localizations of these rings. 
\end{remark}

Geometric motivic integration was defined in the framework of separated formal schemes topologically of finite type, over the formal spectrum of a discretely value ring with perfect residue field, has been first introduced in Sebag's work \cite{Se}, and in the framework of the classical rigid analytic spaces by Loeser-Sebag \cite{LS}. Some extensions of these constructions in the context of the formal/rigid geometry has also be developed in the works of Nicaise-Sebag \cite{NS, NS1, NS2} and Nicaise \cite{Ni2}. A key tool of \cite{LS}, \cite{NS, NS2} is the concept of N\'eron models and N\'eron smoothenings in motivic integration. By using this, Nicaise is able to extend in \cite{Ni2} various results of \cite{LS}, \cite{NS, NS2} to those in the framework of special formal schemes.

Let us consider Theorem \ref{full} in terms of \cite[Cor. 7.13]{Ni2} (this result extends \cite[Cor. 7.7]{NS}) that concerns the motivic volume $S(\mathfrak X,\ka((t))^{\alg})$, and we have $S(\mathfrak X,\ka((t))^{\alg})= [\mathbb A_{\mathfrak X_0}^1]^{-(d-1)}\mathcal S_{f_{\mathfrak X}}$ (here $\mathfrak X_0=\mathbb A_{\ka}^{d_1}$). The image $\SE([\mathfrak X_{\eta}])$ of the motivic volume under the forgetful morphism only depends on $\mathfrak X_{\eta}$ and it satisfies the identity
$$\int_{\mathbb A_{\ka}^{d_1}}\mathcal S_{f_{\mathfrak X}}=[\mathbb A_{\ka}^1]^{d-1}\SE([\mathfrak X_{\eta}]).$$
According to Corollary \ref{sosanh}, $\SE$ defines a group morphism from the Grothendieck ring $K(\BSRig_{\ka((t))^{\alg}})$ of bounded smooth rigid $\ka((t))^{\alg}$-varieties to $\mathscr M_{\ka}^{\hat{\mu}}$. Decomposing $\mathfrak X_{\eta}$ into a disjoint union of $X'_0$ and $X'_1$ subject to the conditions $(x=0\ \text{or}\ y=0)$ and $(x\not=0\ \text{and}\ y\not=0)$, we are able to prove that 
$$[\mathbb A_{\ka}^1]^{d_1}\mathcal S_{f_{\mathfrak Z}}=[\mathbb A_{\ka}^1]^{d-1}\SE([X'_0])$$ 
in the ring $\mathscr M_{\ka}^{\hat{\mu}}$. Moreover, if $\ka$ is an algebraically closed field, Hrushovski-Kazhdan's integration \cite{HK, HK2} can be applied to the theory $\ACVF(0,0)$ for rigid varieties with base structure $\ka((t))$. Indeed, in Section \ref{sec66}, we show that the identity
$$\loc\left([\mathbb A_{\ka}^1]^{d-1}\SE([X'_1])\right)=\hl([X_1^{\prime\star}])$$
holds in $\mathscr M_{\ka,\loc}^{\hat{\mu}}$, where $X_1^{\prime\star}$ is nothing but $X_1$ in (\ref{hehehihi}). As above $\hl([X_1])=0$, thus Theorem \ref{full} follows. 

\setcounter{tocdepth}{1} 
\tableofcontents

\section{Preliminaries on the theory $\ACVF(0,0)$}\label{prepare1}
In this section, we shall use \cite{HK,HK2} and \cite{HL} as principal references.
\subsection{Notation}
Let us consider the theory $\ACVF(0,0)$ of algebraically closed valued fields of equal characteristic zero, which has two sorts $\VF$ and $\RV$. The language on $\ACVF(0,0)$ consists of 

\begin{itemize}
  \item the language of rings on the $\VF$-sort, 
  \item the language on $\RV$-sort with abelian group operations $\cdot$, $/$, a unary predicate $\k^{\times}$ for a subgroup, a binary operation $+$ on $\k=\k^{\times}\cup\{0\}$, and 
  \item the function notation $\rv$ for a function $\VF^{\times}\to \RV$. 
\end{itemize} 
The theory states that $\VF$ is a valued field, with valuation ring $R$ and maximal ideal $\mathfrak m$, $\rv: \VF^{\times}\to \RV$ is a surjective morphism of groups, and the restriction to $R$ (augmented by $0\mapsto 0$) is a surjective morphism of rings. For an ordered abelian group $A$, the structure $\ACVF_A(0,0)$ induces on $\Gamma$ a uniquely divisible abelian group, with constants for the elements of $\Gamma(A)$. Thus, every definable subset of $\Gamma$ is a finite union of points and open intervals.

There are identities $\RV=\VF^{\times}/(1+\mathfrak m)$, $\Gamma=\VF^{\times}/R^{\times}$ and $\k=R/\mathfrak m$, and an exact sequence of morphisms of groups
$$0\to \k^{\times}\to \RV\stackrel{\val_{\rv}}{\to}\Gamma\to 0.$$
There are natural maps $\rv: \VF\to \RV$, $\val: \VF\to \Gamma$ and $\val_{\rv}: \RV\to \Gamma$. By definition, $R$ and $\mathfrak m$ are the non-archimedian closed and open unit discs, respectively.

\subsection{Definable bounded sets}
Fix a base field, an ordered abelian group $A$, and let $n$ be a natural number. 

Let $\Gamma_A[n]$ be the category of $A$-definable subsets of $\Gamma^n$, in which a morphism is an $A$-definable bijection. Denote now by $\vol\Gamma_A[n]$ the subcategory of $\Gamma_A[n]$ such that each morphism $h:X\to Y$ satisfies the condition 
$$|x|=|h(x)|,$$ 
where $|x|:=\sum_ix_i$. In the sequels, we concern categories $\vol\Gamma_A^{2\bdd}[n]$, $\vol\Gamma_A^{\bdd}[n]$, which are the full subcategories of $\vol\Gamma_A[n]$ with objects bounded, bounded below, respectively.

We also consider the category $\RV_A[n]$, resp. $\vol\RV_A[n]$, of pairs $(X,f)$ in which $X$ is an $A$-definable subset of $\RV^k$, for some integer $k\geq 0$, $f:X\to \RV^n$ is a finite-to-one map (i.e., $\Ob\vol\RV_A[n]=\Ob\RV_A[n]$). A morphism from $(X,f)$ to $(Y,g)$ in $\RV_A[n]$, resp. in $\vol\RV_A[n]$, is a definable bijection $h: X\to Y$, resp. a definable bijection $h: X\to Y$ satisfying 
$$|\val_{\rv}f(x)|= |\val_{\rv}g(h(x))|$$ 
for every $x$ in $X$. Here, by $\val_{\rv}(y)$ for $y=(y_1,\dots,y_n)\in\RV^n$ we means the $n$-tuples $(\val_{\rv}(y_1),\dots,\val_{\rv}(y_n))$. Let $\vol\RV_A^{\bdd}[n]$, resp. $\vol\RV_A^{2\bdd}[n]$, be the full subcategory of $\vol\RV_A[n]$ whose objects have $\Gamma_A$-image contained in $[\gamma,\infty]^n$, resp. in $[\gamma,\delta]^n$, for some $\gamma, \delta\in\Gamma_A$. Let $\RES[n]$, resp. $\vol\RES[n]$, denote the full subcategory of $\RV[n]$, resp. $\vol\RV[n]$, such that $\Gamma_A$-image of its objects is a finite set.

For $\gamma$ in $\Gamma$, we set $V_{\gamma}=\{x\in \RV\mid \val_{\rv}(x)=\gamma\}\cup \{0\}$, a 1-dimensional $\k$-vector space. Let $\RES$ denote the generalized residue structure as explained in \cite[Subsec. 2.2]{HL}, it consists of the definable sets $V_{\Gamma}$ ($\gamma$ in $\Gamma$) and weighted polynomial functions. We also can consider $\RES$ as a category whose objects are definable subsets of a finite products of $V_{\gamma}$'s and whose morphisms are definable bijections.

The category $\VF_A[n]$ is by definition the category of definable subsets of $n$-dimensional varieties over $K$. In other words, an object of $\VF_A[n]$ is an $A$-definable subset $X$ of $\VF^k\times\RV^{\ell}$, for some $k, \ell\geq 0$, which admits a finite-to-one map $X\to\VF^n$. The category $\vol\VF[n]$ is defined as follows. Objects of $\vol\VF[n]$ are exactly those of $\VF_A[n]$. A morphism $(X,f)\to (Y,g)$ in $\vol\VF[n]$ is an $A$-definable bijection $h:X\to Y$ such that 
$$\val\left(\Jac_h(x)\right)=0$$
for every $x$ in $X$ outside a variety of dimension $<n$. The category $\vol\VF_A^{\bdd}[n]$ is defined to be the full subcategory of $\vol\VF_A[n]$ of objects $(X,f)$ with $f(X)$ bounded.

From now on, we shall omit the subscript $A$ whenever possible. From above categories $\mathscr C[n]$, one can define the category $\mathscr C[*]$ as the direct sum $\bigoplus_{n\geq 0}\mathscr C[n]$. If $\mathscr C$ is one of the previous categories, we shall denote by $K_+(\mathscr C)$ and $K(\mathscr C)$ its Grothendieck semi-ring and Grothendieck ring, respectively.


\section{From definable $\VF$-sets to algebraic $\ka$-varieties}\label{prepare10}
We shall consider the theory $\ACVF(0,0)$ with base structure $\ka((t))$ in Sections \ref{prepare10} and \ref{proofthm1}; naturally, we shall define $\val(t)=1$.

\subsection{}
By \cite{HK2}, there is a canonical morphism of Grothendieck semi-rings 
\begin{align}\label{eq7}
\Psi: K_+(\vol\RES[*])\otimes K_+(\vol\Gamma^{\bdd}[*])\to K_+(\vol\RV^{\bdd}[*]),
\end{align}
where $\ker(\Psi)$ is generated by $[\val_{\rv}^{-1}(\gamma)]_1\otimes 1 - 1\otimes [\gamma]_1$, with $\gamma$ definable in $\Gamma$. The subscript 1 means that the classes are in degree 1. By \cite{HL}, the restriction of the morphism (\ref{eq7}) to $K_+(\vol\RES[*])\otimes K_+(\vol\Gamma^{2\bdd}[*])$ yields a canonical morphism
\begin{align}\label{eq8}
\Psi: K_+(\vol\RES[*])\otimes K_+(\vol\Gamma^{2\bdd}[*])\to K_+(\vol\RV^{2\bdd}[*]).
\end{align}

Now for any integer $n\geq 0$ one defines a map $\L: \Ob\vol\RV[n]\to \Ob\vol\VF[n]$ sending a pair $(X,f)$ to the set $\{(y_1,\dots,y_n,x)\in \VF^n\times X \mid \rv(y_i)=f_i(x)\}$. It induces by Lemma 3.21 of \cite{HK2} a canonical morphism of semi-rings
\begin{align}\label{cachan}
\int: K_+(\vol\VF^{\bdd}[n])\to K_+(\vol\RV^{\bdd}[n])/I'_{\sp},
\end{align}
where $I'_{\sp}$ is the congruence generated by $[1]_1=[\RV^{>0}]_1$ with the constant volume form $0$ in $\Gamma$. It satisfies the property that $[X]=[\L(V)]$ in $K_+(\vol\VF^{\bdd}[n])$ if and only if $\int([X])=[V]+ I'_{\sp}$ in $K_+(\vol\RV^{\bdd}[n])/I'_{\sp}$. This morphism induces a morphism between corresponding rings, which we shall also denote by $\int$.


\subsection{The morphisms $h_m$ and $\tilde{h}_m$}\label{hm2012}
Intuitively, each element of $\RV$ has the form $\alpha t^a$ with $\alpha\in\k^{\times}$ and $a\in\Gamma$. Furthermore, if $X$ is a definable subset of $\RES^n$, its elements are $n$-tuples $(\alpha_1 t^{a_1},\dots, \alpha_n t^{a_n})$ where $\alpha_i\in \k^{\times}$ and all $a_i$ belong to a finite set $F$. Identifying all the elements of $F$, $X$ is nothing but a definable subset of the affine variety $\mathbb A_{\ka}^n$. This motivates the definition of $!K(\vol\RES[*])$ and $!K(\RES)$. Precisely, $!K(\vol\RES[*])$ (resp. $!K(\RES)$) is the quotient of $K(\vol\RES[*])$ (resp. $K(\RES)$) subject to $[\val_{\rv}^{-1}(a)]=[\val_{\rv}^{-1}(0)]$ for $a$ running over $\Gamma$.

Let us recall the construction of $h_m$ and $\tilde{h}_m$ in the article \cite[Subsec. 8.2]{HL}. For an integer $m\geq 1$ and for a bounded definable subset $\Delta$ of $\Gamma^n$ (recall that ``bounded'' means ``two-sided bounded''), putting
\begin{align}\label{nice1}
a_m(\Delta)=\sum_{\gamma\in\Delta\cap(1/m \mathbb Z)^n}[\mathbb A_{\ka}^1]^{-m|\gamma|}([\mathbb A_{\ka}^1]-1)^n ,
\end{align}
one defines a morphism of rings 
$$a_m: K(\vol\Gamma^{2\bdd}[*])\to !K(\RES)[[\mathbb A_{\ka}^1]^{-1}].$$ 
It is shown in \cite[Subsec. 8.2]{HL} that, if $\Delta$ is a bounded below definable subset of $\Gamma^n$, the RHS of (\ref{nice1}), now an infinite sum, is an element of $!K(\RES)[[\mathbb A_{\ka}^1]^{-1}]_{\loc}$. This induces a morphism of rings 
$$\tilde{a}_m: K(\vol\Gamma^{\bdd}[*])\to !K(\RES)[[\mathbb A_{\ka}^1]^{-1}]_{\loc}.$$

Now, let $X=(X,f)$ be in $\RES[n]$ with $f(X)\subset V_{\gamma_1}\times\cdots\times V_{\gamma_n}$, $\gamma=(\gamma_1,\dots,\gamma_n)$. We denote by $[1]_1$ the class of $\{1\}$ in $K(\vol\RES[1])$. Setting 
\begin{equation*}
b_m(X)=
\begin{cases}
[X]\left(\frac{[1]_1}{[\mathbb A_{\ka}^1]}\right)^{m|\gamma|}\ &\text{if}\  m\gamma\in \mathbb Z^n\\
0\ &\text{otherwise}
\end{cases}
\end{equation*}
one gives rise to a morphism of rings $b_m: K(\vol\RES[*])\to !K(\vol\RES[*])[[\mathbb A_{\ka}^1]^{-1}]$. Then, composing this morphism with the canonical forgetful morphism 
$$!K(\vol\RES[*])[[\mathbb A_{\ka}^1]^{-1}]\to !K(\RES)[[\mathbb A_{\ka}^1]^{-1}]$$ 
yields the following morphism of rings, also denoted by $b_m$,
$$b_m: K(\vol\RES[*])\to !K(\RES)[[\mathbb A_{\ka}^1]^{-1}].$$
We also consider the morphism 
$$\tilde{b}_m: K(\vol\RES[*])\to !K(\RES)[[\mathbb A_{\ka}^1]^{-1}]_{\loc}$$ 
which is the composition of $b_m$ with the localization morphism.

The tensor product of $b_m$ and $a_m$ is a morphism 
$$b_m\otimes a_m: K(\vol\RES[*])\otimes K(\vol\Gamma^{2\bdd}[*])\to !K(\RES)[[\mathbb A_{\ka}^1]^{-1}]$$
that satisfies the condition that $\ker(\Psi)\subset \ker(b_m\otimes a_m)$. Indeed, let us take a generator $u$ of $\ker(\Psi)$, say, $u=[\val_{\rv}^{-1}(\gamma)]_1\otimes 1 - 1\otimes [\gamma]_1$ with $\gamma\in\Gamma$. Assuming $\gamma=i/m$, we have $a_m([\gamma]_1)=[\mathbb A_{\ka}^1]^{-i}([\mathbb A_{\ka}^1]-1)$ and $[\val_{\rv}^{-1}(\gamma)]_1=[\mathbb A_{\ka}^1]-[1]_1$ in $!K_+(\vol\RES[1])$. Thus $a_m([\gamma]_1)=b_m([\val_{\rv}^{-1}(\gamma)]_1)$, it implies that $(b_m\otimes a_m)(u)=0$. Then, one deduces that $b_m\otimes a_m$ factors into $\Psi$ followed by a morphism 
$$h_m: K(\vol\RV^{2\bdd}[*])\to !K(\RES)[[\mathbb A_{\ka}^1]^{-1}],$$
that is, $b_m\otimes a_m=h_m\circ \Psi$. In the same way, the tensor product of morphisms
$$\tilde b_m\otimes \tilde a_m: K(\vol\RES[*])\otimes K(\vol\Gamma^{\bdd}[*])\to !K(\RES)[[\mathbb A_{\ka}^1]^{-1}]_{\loc}$$
induces a morphism 
$$\tilde h_m: K(\vol\RV^{\bdd}[*])\to !K(\RES)[[\mathbb A_{\ka}^1]^{-1}]_{\loc}$$
and the following diagram 
\begin{align*}
\begin{CD}
K(\vol\RV^{2\bdd}[*])@>h_m>> !K(\RES)[[\mathbb A_{\ka}^1]^{-1}]\\
@VVV @VVV\\
K(\vol\RV^{\bdd}[*]) @>\tilde h_m>>!K(\RES)[[\mathbb A_{\ka}^1]^{-1}]_{\loc}
\end{CD}
\end{align*}
is commutative. According to \cite[Lem. 8.2.2]{HL}, for every $m\geq 1$, the morphism $\tilde h_m$ vanishes on the congruence $I'_{\sp}$, since $\tilde h_m([\RV^{>0}]_1)=\tilde h_m([1]_1)=1$; thus it factors through a morphism 
$$K(\vol\RV^{\bdd}[*])/I'_{\sp}\to !K(\RES)[[\mathbb A_{\ka}^1]^{-1}]_{\loc}.$$
We denote the latter also by $\tilde h_m$. In particular, for two elements $Y$ and $Y'$ in $K(\vol\RV^{2\bdd}[*])$ having the same image in $K(\vol\RV^{\bdd}[*])/I'_{\sp}$, we have that $h_m(Y)$ and $h_m(Y')$ have the same image in $!K(\RES)[[\mathbb A_{\ka}^1]^{-1}]_{\loc}$ (cf. \cite{HL}).


\subsection{The morphism $\Upsilon$}\label{ups2012}
We shall recall the morphism $\Upsilon$ which was already constructed in \cite[Subsec. 8.5]{HL}. Let $\chi$ be the o-minimal Euler characteristic defined in \cite[Lem. 9.5]{HK}, and we consider the following morphisms  
$$\alpha: K(\vol\Gamma[*])\to!K(\RES)[[\mathbb A_{\ka}^1]^{-1}]$$
and
$$\beta: K(\vol\RES[*])\to!K(\RES)[[\mathbb A_{\ka}^1]^{-1}]$$
which are given, respectively, by $\alpha([\Delta])=\chi(\Delta)([\mathbb A_{\ka}^1]-1)^n$ for $[\Delta]\in K(\vol\Gamma[n])$ and $\beta([X])=[X]$ for $[X]$ in $K(\vol\RES[n])$. Tensoring of $\beta$ with $\alpha$ yields a morphism
$$\beta\otimes\alpha: K(\vol\RES[*])\otimes K(\vol\Gamma[*]) \to!K(\RES)[[\mathbb A_{\ka}^1]^{-1}].$$
Since $\ker(\beta\otimes \alpha) \subset \ker(\Psi)$ (cf. \cite[Subsec. 8.2]{HL}), this morphism $\beta\otimes\alpha$ induces a morphism of rings
$$\Upsilon: K(\vol\RV[*])\to!K(\RES)[[\mathbb A_{\ka}^1]^{-1}].$$
Similarly, one may define a morphism of rings 
$$K(\vol\RV^{2\bdd}[*])\to!K(\RES)[[\mathbb A_{\ka}^1]^{-1}],$$
which is also denoted by $\Upsilon$.

Proposition 8.5.1 of \cite{HL} points out the relation between $h_m$'s and $\Upsilon$ as follows. For any $Y$ in $K(\vol\RV^{2\bdd}[*])$, the formal series $\sum_{m\geq 1}h_m(Y)T^m$ lives in $!K(\RES)[[\mathbb A_{\ka}^1]^{-1}][[T]]_{\sr}$, i.e., it is a rational function, and one has 
\begin{align}\label{ooo}
\lim_{T\to\infty}\sum_{m\geq 1}h_m(Y)T^m=-\Upsilon(Y).
\end{align}

\subsection{The morphism $\tilde{\Theta}$}
Following \cite{HK} and \cite{HL}, consider a sequence $(t_m)_{m\geq 1}$ in a fixed algebraic closure $\ka((t))^{\alg}$ of $\ka((t))$ given by $t_1=t$, $t_{nm}^m=t_n$ for $n\geq 1$, and set $t_{k/m}:=t_m^k$, $\t_{k/m}:=\rv(t_{k/m})$. Let $X$ be a $\ka((t))$-definable set over $\RES$, that is, $X$ is a $\ka((t))$-definable subset of $\RV^n$ whose image under $\val_{\rv}$ is finite. Since the sorts of $\RES$ are the $\k$-vector spaces 
$$V_{k/m}:=\left\{x\in\RV \mid \val_{\rv}(x)=k/m\right\}\cup \{0\},$$
one can view $X$ as a definable subset of $\prod_{i=1}^nV_{k_i/m}$ for some $n$, $m$ and $k_i$'s. The group $\hat{\mu}$ acts on $X$ via $\mu_m$. The image $Y$ of $X$ under the $\ka((t^{1/m}))$-definable function 
$$(x_1,\dots,x_n)\mapsto (x_1/\t_{k_1/m},\dots, x_n/\t_{k_n/m})$$
is a $\ka$-definable subset of $\k^n$, that is, a constructible subset of $\mathbb A_{\ka}^n$, and it is endowed with a $\mu_m$-action induced from the one on $X$. In other words, the correspondence $X\mapsto Y$ defines a morphism of Grothendieck semi-rings $K_+(\RES)\to K_+(\Var_{\ka}^{\hat{\mu}})$, which by \cite[Lem. 10.7]{HK} and \cite[Prop. 4.3.1]{HL} induces an isomorphism 
$$\Theta:\ !K(\RES)[[\mathbb A_{\ka}^1]^{-1}]\to !K(\Var_{\ka}^{\hat{\mu}})[[\mathbb A_{\ka}^1]^{-1}].$$
Here $!K(\Var_{\ka}^{\hat{\mu}})$ stands for the quotient of $K(\Var_{\ka}^{\hat{\mu}})$ by identifying all the classes $[\mathbb G_m, \sigma]$ with $\sigma$ a $\hat{\mu}$-action on $\mathbb G_m$ induced by multiplication by roots of $1$. Composing the morphism $\Theta$ with the natural morphism $!K(\Var_{\ka}^{\hat{\mu}})[[\mathbb A_{\ka}^1]^{-1}]\to \mathscr M_{\ka}^{\hat{\mu}}$ yields a morphism of rings
$$\tilde{\Theta}:\ !K(\RES)[[\mathbb A_{\ka}^1]^{-1}] \to \mathscr M_{\ka}^{\hat{\mu}}.$$
We also denote by $\tilde{\Theta}$ the morphism at the level of localization induced by the previous morphism 
$$\tilde{\Theta}:\ !K(\RES)[[\mathbb A_{\ka}^1]^{-1}]_{\loc} \to \mathscr M_{\ka,\loc}^{\hat{\mu}}.$$


\subsection{The morphism {\rm $\hl$}}\label{submain}
Let us consider now the compositions $\hl=\tilde{\Theta}\circ \Upsilon\circ \int$ and $\hl_m=\tilde{\Theta}\circ \tilde h_m\circ \int$ that are viewed as morphisms of rings 
$$K(\vol\VF^{\bdd}[*])\to \mathscr M_{\ka,\loc}^{\hat{\mu}}.$$

Recall from \cite[Subsec. 3]{HL} that, given $\beta=(\beta_1,\dots,\beta_n)\in\Gamma^n$, a definable subset $X\subset\VF^n\times\RV^m$ is $\beta$-invariant if, for any $(x,x')\in\VF^n\times\RV^m$ and any $(y,y')\in\VF^n\times\RV^m$ with $\val(y_i)\geq\beta_i$ for $i=1,\dots,n$, both $(x,x')$ and $(x,x')+(y,y')$ simultaneously belong to either $X$ or the complement of $X$ in $\VF^n\times\RV^m$. By \cite[Lem. 3.1.1]{HL}, for any definable subset $X\subset\VF^n$ which is bounded and closed in the valuation topology, there exists a $\beta\in\Gamma^n$ such that $X$ is $\beta$-invariant. If $\beta_i=\tilde{\beta}$ for every $i=1,\dots,n$, we shall say $\tilde{\beta}$-invariant to mean $\beta$-invariant for such $X$. The following definition is also necessary: a subset of $\RV^{\ell}$ is {\it boundedly imaginary} if its image in $\Gamma^n$ under the map $\val_{\rv}$ is bounded.

We note that, because the field $\ka$ is algebraically closed, every subfield $\ka((t^{1/m}))$ of $\ka((t))^{\alg}$ is independent of the choice of $t^{1/m}$ (see Subsection \ref{tetdenroi} for proof). The group $\mu_m$ acts naturally on $\ka((t^{1/m}))$. Let $\beta\in\Gamma^n$, and let $X$ be a $\beta$-invariant $\ka((t))$-definable subset of $\VF^n\times\RV^{\ell}$ such that the projection $X\to \VF^n$ is finite-to-one. The set $X$ is also assumed to be contained in $\VF^n\times W$ with $W$ a boundedly imaginary definable subset of $\RV^{\ell}$, and that $X_w:=\{x\in\VF^n \mid (x,w)\in X\}$ is a bounded subset of $\VF^n$ for every $w\in W$. By \cite{HK}, the $\ka((t^{1/m}))$-points of $X$ are the pullback of some definable subset 
$$X[m;\beta]\subset \prod_{i=1}^n\ka[t^{1/m}]/(t^{\beta_i})\times\RV^{\ell},$$ 
and the projection $X[m;\beta]\to\VF^n$ is finite-to-one. If every component of $\beta$ is equal to $\tilde{\beta}\in\Gamma$, we shall write $X[m;\tilde{\beta}]$ instead of $X[m;\beta]$. Now, for $\beta'=(\beta'_1,\dots,\beta'_n)\in\Gamma^n$ with $\beta_i\leq\beta'_i$ for every $i=1,\dots, n$, one has the following identity
$$[X[m;\beta']]=[X[m;\beta]][\mathbb A_{\ka}^{m(|\beta'|-|\beta|)}],$$ 
which lives in $!K(\RES)$. Thus, the quantity
$$\widetilde{X}[m]:=[X[m;\beta]][\mathbb A_{\ka}^1]^{-m|\beta|+n}$$
in $!K(\RES)[[\mathbb A_{\ka}^1]^{-1}]$ is independent of the choice of $\beta$ large enough. By abuse of notation, we also write $\widetilde{X}[m]$ for its image $\tilde{\Theta}(\widetilde{X}[m])\in\mathscr M_{\ka}^{\hat{\mu}}$ under $\tilde{\Theta}$.

\begin{proposition}\label{eq010}
Given $n, m, \ell\in\mathbb N$ and $\beta\in\Gamma^n$. Let $X$ be a $\beta$-invariant $\ka((t))$-definable subset of $\VF^n\times\RV^{\ell}$ such that $X$ is contained in $\VF^n\times W$ with $W$ a boundedly imaginary definable subset of $\RV^{\ell}$, and that $X_w$ is bounded for every $w\in W$. Assume that the projection $X\to\VF^n$ is finite-to-one. Then, we have
\begin{itemize}
\item[(i)] the identity $\hl_m([X])=\loc\left(\widetilde{X}[m]\right)$ holds in $\mathscr M_{\ka,\loc}^{\hat{\mu}}$;
\item[(ii)] the formal series $\sum_{m\geq 1}\hl_m([X])T^m$ in $\mathscr M_{\ka,\loc}^{\hat{\mu}}[[T]]$ is a rational function, whose image through $\lim_{T\to\infty}$ is equal to $-\hl([X])$ in $\mathscr M_{\ka,\loc}^{\hat{\mu}}$.
\end{itemize}
\end{proposition}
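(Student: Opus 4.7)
My plan is to trace $[X]$ through the chain $\hl_m=\tilde\Theta\circ\tilde h_m\circ\int$ by first converting it to canonical $\RV$-data adapted to $\beta$, and then applying the explicit formulas defining $\tilde h_m$ and $\tilde\Theta$.

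For (i), I would use the $\beta$-invariance to replace $[X]$ by an equivalent class built from $\RES$- and $\Gamma$-data: each fiber $X_w$ is a disjoint union of polyballs of polyradius $\beta$, and reduction modulo $t^{\beta_i}$ on $\ka((t^{1/m}))$-points identifies $X$ with $X[m;\beta]\subset\prod_{i=1}^n\ka[t^{1/m}]/(t^{\beta_i})\times\RV^\ell$. Via the lift $\L$ and the characterization of $\int$ in (\ref{cachan}), this identifies $\int([X])$ modulo $I'_{\sp}$ with a class in $K(\vol\RV^{\bdd}[*])$ arising under $\Psi$ from a tensor whose $\RES$-factor is the class of $X[m;\beta]$ and whose $\Gamma$-factor is $\prod_i[0,\beta_i)$. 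Next I would apply $\tilde h_m$, which factors as $\tilde b_m\otimes\tilde a_m$ through $\Psi$: by (\ref{nice1}), $\tilde a_m$ on $\prod_i[0,\beta_i)$ is a geometric sum over $(1/m\mathbb Z)^n\cap\prod_i[0,\beta_i)$ of $[\mathbb A^1]^{-m|\gamma|}([\mathbb A^1]-1)^n$ which, after localization (absorbing the $([\mathbb A^1]-1)^n$ via the identification $[1]_1=[\RV^{>0}]_1$), contributes $[\mathbb A^1]^{-m|\beta|+n}$. Multiplying by $\tilde b_m$ of the $\RES$-factor, equal to $[X[m;\beta]]$ after the forgetful morphism, and then pushing through $\tilde\Theta$, recovers $\loc(\widetilde X[m])$. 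Independence of the choice of large $\beta$ is then automatic from $[X[m;\beta']]=[X[m;\beta]][\mathbb A_{\ka}^{m(|\beta'|-|\beta|)}]$.

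For (ii), I would invoke (\ref{ooo}) with $Y=\int([X])$. Under the boundedness hypotheses on $X_w$ and $W$, $Y$ lifts to $K(\vol\RV^{2\bdd}[*])/I'_{\sp}$, since a bounded closed subset of $\VF^n$ has two-sided bounded coordinate valuations. Then $\sum_m h_m(Y)T^m$ is rational in $!K(\RES)[[\mathbb A^1]^{-1}][[T]]_{\sr}$ with $\lim_{T\to\infty}$ equal to $-\Upsilon(Y)$; using the commutative square relating $h_m$ to $\tilde h_m$ and then applying $\tilde\Theta$, this transfers to rationality of $\sum_m\hl_m([X])T^m$ in $\mathscr M_{\ka,\loc}^{\hat\mu}[[T]]_{\sr}$ and to the identity $\lim_{T\to\infty}\sum_m\hl_m([X])T^m=-\hl([X])$.

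The hard part will be the bookkeeping in (i): matching $\vol$-weights through $\Psi$ (so that the generators of $\ker(\Psi)$ actually map to zero under $b_m\otimes a_m$), summing the $n$-dimensional geometric series on the lattice cube, and verifying that the localization absorbs $(([\mathbb A^1]-1)/[\mathbb A^1])^n$ so that the exponent becomes exactly $-m|\beta|+n$. A secondary difficulty in (ii) is the lifting from $\vol\RV^{\bdd}$ to $\vol\RV^{2\bdd}$ needed to apply (\ref{ooo}); this should follow from the two-sided boundedness of coordinate valuations on $X$, but requires checking that the lift is compatible with $\tilde h_m$ through the commutative square of Subsection \ref{hm2012}.
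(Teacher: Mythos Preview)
The paper's own proof is a two-line citation: part (i) is \cite[Prop.~8.2.3]{HL} and part (ii) is the relation (\ref{ooo}). Your proposal attempts to unpack the argument behind these citations, which is a reasonable goal, but part (i) contains a genuine error.

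You claim that $\int([X])$ modulo $I'_{\sp}$ arises under $\Psi$ from the tensor $[X[m;\beta]]\otimes\bigl[\prod_i[0,\beta_i)\bigr]$. This cannot be right: $\int([X])$ is a fixed class in $K(\vol\RV^{\bdd}[n])/I'_{\sp}$, independent of $m$, whereas $X[m;\beta]$ depends on $m$ (its ambient $\RES$-dimension is roughly $m|\beta|$, not anything intrinsic to $X$). The $\RES$--$\Gamma$ tensor decomposition of $\int([X])$ through $\Psi$ must be $m$-independent; the object $X[m;\beta]$ appears only \emph{after} one applies $\tilde h_m$, not before. Moreover, your geometric-sum claim fails on its own terms: summing $[\mathbb A^1]^{-m|\gamma|}([\mathbb A^1]-1)^n$ over $\gamma\in\prod_i[0,\beta_i)\cap(\tfrac{1}{m}\mathbb Z)^n$ gives
\[
[\mathbb A^1]^{\,n-m|\beta|}\prod_{i=1}^n\bigl([\mathbb A^1]^{m\beta_i}-1\bigr),
\]
not $[\mathbb A^1]^{-m|\beta|+n}$; the extra factor $\prod_i([\mathbb A^1]^{m\beta_i}-1)$ is not killed by $I'_{\sp}$ or by localization. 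The argument in \cite[Prop.~8.2.3]{HL} proceeds instead by finding the genuine $m$-independent $\Psi$-decomposition of $\int([X])$ and then tracking how $b_m\otimes a_m$ acts on it to reproduce $\widetilde X[m]$; the identification of $X[m;\beta]$ with a $\RES$-class is the \emph{output} of that computation, not its input.

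Your plan for (ii) is essentially the paper's: apply (\ref{ooo}) and push through $\tilde\Theta$. The lift to $K(\vol\RV^{2\bdd}[*])$ is indeed the only delicate point, but your stated reason (``a bounded closed subset of $\VF^n$ has two-sided bounded coordinate valuations'') is false as written---any coordinate equal to $0$ has valuation $+\infty$. The correct justification uses the structure of $I'_{\sp}$, which identifies $[\RV^{>0}]_1$ with $[1]_1$ and thereby allows one to replace unbounded rays by points, together with the commutative square of Subsection \ref{hm2012}.
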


\begin{proof}
(i) follows from \cite[Prop. 8.2.3]{HL}, (ii) follows from (\ref{ooo}).
\end{proof}

Remark 8.2.2 of \cite{HL} provides a simple example of $X$ satisfying this proposition. Namely, $X$ is a bounded $\beta$-invariant $\ka((t))$-definable subset of $\mathscr X(R)$, where $\mathscr X$ is a smooth $\ka$-variety which carries a volume form. 

\begin{example}\label{exam3.2}
Let $\mathscr X$ be a smooth connected affine $\ka$-variety of pure dimension $d$, and let $f$ be a non-constant regular function on $\mathscr X$ with zero locus $\mathscr X_0$. Denote by $\pi$ the reduction map $\mathscr X(R)\to \mathscr X(\k)$. For a closed point $\x$ in $\mathscr X_0$, one puts
$$\mathscr X_{f,\x}:=\left\{x\in \VF^d \mid \val(x)\geq 0, \rv(f(x))=\rv(t), \pi(x)=\x\right\}.$$
Then $\mathscr X_{f,\x}$ is a bounded $\beta$-invariant definable subset of $\VF^d$ for any $\beta\in\mathbb N_{>0}$. By \cite[Prop. 8.3.1, Cor. 8.5.3]{HL}, one has
\begin{align*}
\hl_m([\mathscr X_{f,\x}])&=\loc\left([\mathscr X_{\x,m}(f)][\mathbb A_{\ka}^1]^{-md}\right),\\
\hl([\mathscr X_{f,\x}])&=\loc\left(\mathcal S_{f,\x}\right)
\end{align*} 
in the ring $\mathscr M_{\ka,\loc}^{\hat{\mu}}$.
\end{example}

\section{Proof of the regular version (Theorem \ref{conj})}\label{proofthm1}
\subsection{Data from the polynomial $f$}\label{data111}
First of all, let $h:=f|_{\mathbb A_{\ka}^{d_3}}$ and 
\begin{equation}\label{eq11}
\begin{aligned}
X:=\left\{(x,y,z)\in\VF^d\ 
\begin{array}{|l}
\val(x)\geq 0, \val(y)>0, \val(z)>0\\
\rv(f(x,y,z))=\rv(t)
\end{array}
\right\}.
\end{aligned}
\end{equation}
This set has the properties of the $X$ in Proposition \ref{eq010} (with $\ell=0$) as explained in \cite[Rmk. 8.2.2(2)]{HL}, it also has the $\beta$-invariance by \cite[Cor. 4.2.2]{HL} and the boundedness by definition. Let us now write $X$ as a disjoint union $X=X_0 \sqcup X_1$ of its definable subsets, where
\begin{align*}
X_0&=\left\{(x,y,z)\in X \mid x=0\ \text{or}\ y=0\right\},\\
X_1&=\left\{(x,y,z)\in X \mid x\not=0\ \text{and}\ y\not=0\right\}.
\end{align*}
By the same argument, we deduce that $X_0$, $X_1$ also satisfy Proposition \ref{eq010}. In the sequel, the following equalities will be shown to be true in $\mathscr M_{\ka,\loc}^{\hat{\mu}}$:
\begin{itemize}
  \item[(i)] $\hl([X])=\loc\left(\int_{\mathbb A_{\ka}^{d_1}}i^*\mathcal S_f\right)$ (see Subsection \ref{sub42}),
  \item[(ii)] $\hl([X_0])=\loc\left([\mathbb A_{\ka}^1]^{d_1}\mathcal S_{h,0}\right)$ (see Subsection \ref{sub43}),
  \item[(iii)] $\hl([X_1])=0$ (see Subsection \ref{bbb});
\end{itemize}
and Theorem \ref{conj} is then proved.

\subsection{Computation of {\rm $\hl([X])$}}\label{sub42}
It suffices to consider $\beta=2$, that is, the definable set $X$ is viewed as to be 2-invariant; thus going back to \cite[Subsec. 4.2]{HL} we have the following. For $m\geq 1$, 
\begin{align*}
X[m;2]&=\left\{\varphi\in \left(\ka[t^{1/m}]/(t^2)\right)^d \mid \varphi(0)\in\mathbb A_{\ka}^{d_1}, \rv f(\varphi)=\rv(t)\right\}\\
&=\left\{\varphi\in \left(\ka[t^{1/m}]/(t^2)\right)^d \mid \varphi(0)\in\mathbb A_{\ka}^{d_1}, f(\varphi)=t\mod t^{(m+1)/m}\right\},
\end{align*}
which is isomorphic as a $\ka$-variety via the morphism $t^{1/m}\mapsto t$ to the $\ka$-variety
\begin{gather*}
\left\{\varphi\in \left(\ka[t]/(t^{2m})\right)^d \mid \varphi(0)\in\mathbb A_{\ka}^{d_1}, f(\varphi)=t^m\mod t^{m+1}\right\}\\
\cong \left(\mathscr X_m(f)\times_{\mathscr X_0}\mathbb A_{\ka}^{d_1}\right)\times \mathbb A_{\ka}^{(m-1)d}.
\end{gather*}
Thus
\begin{align*}
\widetilde{X}[m]&=[X[m;2]][\mathbb A_{\ka}^1]^{-2md+d}\\
&=[\mathscr X_m(f)\times_{\mathscr X_0}\mathbb A_{\ka}^{d_1}][\mathbb A_{\ka}^1]^{-md}\\
&=\int_{\mathbb A_{\ka}^{d_1}}i^*\left([\mathscr X_m(f)][\mathbb A_{\mathscr X_0}^1]^{-md}\right),
\end{align*}
and
$$\hl_m([X])=\loc\left(\int_{\mathbb A_{\ka}^{d_1}}i^*\left([\mathscr X_m(f)][\mathbb A_{\mathscr X_0}^1]^{-md}\right)\right).$$
Since 
$$\mathcal S_f=-\lim_{T\to\infty}\sum_{m\geq 1}[\mathscr X_m(f)][\mathbb A_{\mathscr X_0}^1]^{-md}T^m,$$
we deduce from Proposition \ref{eq010} that
\begin{align*}
\hl([X])&=-\lim_{T\to\infty}\sum_{m\geq 1}\hl_m([X])T^m =\loc\left(\int_{\mathbb A_{\ka}^{d_1}}i^*\mathcal S_f\right).
\end{align*}


\subsection{Computation of {\rm $\hl([X_0])$}}\label{sub43}
Similarly as previous, for $m\geq 1$, we have 
\begin{align*}
X_0[m;2]\cong 
\left\{(\varphi_1, \varphi_2,\varphi_3)\
\begin{array}{|l}
\varphi_i\in \left(\ka[t]/(t^{2m})\right)^{d_i}, i=1, 2, 3\\
\varphi_1\equiv 0\ \text{or}\ \varphi_2\equiv 0\\
\varphi_1(0)\in \mathbb A_{\ka}^{d_1}, \varphi_2(0)=0, \varphi_3(0)=0 \\ 
f(\varphi_1, \varphi_2, \varphi_3)=t^m\mod t^{m+1}
\end{array}
\right\}.
\end{align*}
Also, due to the homogeneity of $f$, namely, $f(\varphi_1, \varphi_2, \varphi_3)=f(0,0, \varphi_3)=h(\varphi_3)$ whenever $\varphi_1\equiv 0$ or $\varphi_2\equiv 0$, $X_0[m;2]$ can be written, up to isomorphism, as a cartesian product
$$Y_m\times \left(\mathscr X_{0,m}(h)\times \mathbb A_{\ka}^{(m-1)d_3}\right),$$
where 
\begin{align*}
Y_m&= 
\left\{(\varphi_1, \varphi_2)\
\begin{array}{|l}
\varphi_i\in \left(\ka[t]/(t^{2m})\right)^{d_i}, i=1, 2\\
\varphi_1\equiv 0\ \text{or}\ \varphi_2\equiv 0\\
\varphi_1(0)\in \mathbb A_{\ka}^{d_1}, \varphi_2(0)=0
\end{array}
\right\}\\
&=\left((t)\ka[t]/(t^{2m})\right)^{d_2}\sqcup \left(\left(\ka[t]/(t^{2m})\right)^{d_1}\setminus\{0\}\right)\\
&\cong \mathbb A_{\ka}^{(2m-1)d_2} \sqcup \left(\mathbb A_{\ka}^{2md_1}\setminus\{0\}\right).
\end{align*}
Thus 
\begin{align*}
\widetilde{X}_0[m]&=[X_0[m;2]][\mathbb A_{\ka}^1]^{-2md+d}\\
&=\left([\mathbb A_{\ka}^1]^{(2m-1)d_2}+[\mathbb A_{\ka}^1]^{2md_1}-1\right)[\mathbb A_{\ka}^1]^{-2md+d+(m-1)d_3}[\mathscr X_{0,m}(h)],
\end{align*}
and
\begin{align*}
\sum_{m\geq 1}\widetilde{X}_0[m]T^m&=[\mathbb A_{\ka}^1]^{d_1}\sum_{m\geq 1}[\mathscr X_{0,m}(h)][\mathbb A_{\ka}^1]^{-m(2d_1+d_3)}T^m\\
&+[\mathbb A_{\ka}^1]^{d_1+d_2}\sum_{m\geq 1}([\mathbb A_{\ka}^1]^{2md_1}-1)[\mathscr X_{0,m}(h)][\mathbb A_{\ka}^1]^{-(2d_1+2d_2+d_3)m}T^m.
\end{align*}

Now, we use properties of Hadamard product (see \cite[Subsec. 5.1]{DL3}, \cite[Lem. 7.6]{Loo} or \cite[Subsec. 8.4]{HL}). Since 
$$\sum_{m\geq 1}[\mathscr X_{0,m}(h)][\mathbb A_{\ka}^1]^{-m(2d_1+d_3)}T^m$$ 
is the Hadamard product of two series 
$$\sum_{m\geq 1}[\mathbb A_{\ka}^1]^{-2md_1}T^m=\frac{[\mathbb A_{\ka}^1]^{-2d_1}T}{1-[\mathbb A_{\ka}^1]^{-2d_1}T}$$
and 
$$\sum_{m\geq 1}[\mathscr X_{0,m}(h)][\mathbb A_{\ka}^1]^{-md_3}T^m,$$ 
it follows that 
$$
\lim_{T\to\infty}\sum_{m\geq 1}[\mathscr X_{0,m}(h)][\mathbb A_{\ka}^1]^{-m(2d_1+d_3)}T^m$$
is equal to
$$-\left(\lim_{T\to\infty}\frac{[\mathbb A_{\ka}^1]^{-2d_1}T}{1-[\mathbb A_{\ka}^1]^{-2d_1}T}\right)\cdot \left(\lim_{T\to\infty}\sum_{m\geq 1}[\mathscr X_{0,m}(h)][\mathbb A_{\ka}^1]^{-md_3}T^m\right)=-\mathcal S_{h,0}.$$
Similarly, 
$$\sum_{m\geq 1}([\mathbb A_{\ka}^1]^{2md_1}-1)[\mathscr X_{0,m}(h)][\mathbb A_{\ka}^1]^{-(2d_1+2d_2+d_3)m}T^m$$
is the Hadamard product of 
$$\sum_{m\geq 1}([\mathbb A_{\ka}^1]^{2md_1}-1)T^m=\frac{[\mathbb A_{\ka}^1]^{d_2}T}{1-[\mathbb A_{\ka}^1]^{d_2}T}-\frac{T}{1-T}$$
and
$$\sum_{m\geq 1}[\mathscr X_{0,m}(h)][\mathbb A_{\ka}^1]^{-(2d_1+2d_2+d_3)m}T^m;$$
the former has image zero under $\lim_{T\to\infty}$, thus 
$$\lim_{T\to\infty}\sum_{m\geq 1}([\mathbb A_{\ka}^1]^{2md_1}-1)[\mathscr X_{0,m}(h)][\mathbb A_{\ka}^1]^{-(2d_1+2d_2+d_3)m}T^m=0.$$
The previous arguments and Proposition \ref{eq010} then deduce that
\begin{align*} 
\hl([X_0])&=-\lim_{T\to\infty}\sum_{m\geq 1}\hl_m([X_0])T^m\\
&=-\lim_{T\to\infty}\sum_{m\geq 1}\loc\left(\widetilde{X}_0[m]\right)T^m\\
&=\loc\left([\mathbb A_{\ka}^1]^{d_1}\mathcal S_{h,0}\right).
\end{align*}


\subsection{Computation of {\rm $\hl([X_1])$}}\label{bbb}
We recall from Subsection \ref{data111} that, by definition, $X_1$ is the following definable set
\begin{align*}
X_1=\left\{(x,y,z)\in\VF^d\ 
\begin{array}{|l}
x\not=0 \ \text{and}\ y\not=0\\
\val(x)\geq 0, \val(y)>0, \val(z)>0\\
\rv(f(x,y,z))=\rv(t)
\end{array}
\right\}.
\end{align*}
Clearly, the action of the multiplicative group $G:=\mathbb G_{m,\ka((t))^{\alg}}$ on the set 
$$A:=(\VF^{d_1}-\{0\})\times (\VF^{d_2}-\{0\})\times \VF^{d_3}$$ 
given by 
$$\tau\cdot (x,y,z)=(\tau x,\tau^{-1}y,z),\ \tau\in G,\ (x,y,z)\in A,$$ 
is a free action. Then the canonical projection $A\to  A/G$ induces a surjective map $\rho: X_1\to \tilde{X}_1$, where $\tilde{X}_1$ is the image of $X_1$ under $A\to  A/G$. Observe that an element of $\tilde{X}_1$ is an orbit 
$$\tilde{\xi}_{x,y,z}:=\Big(G\cdot(x,y,z)\Big)\cap X_1=\{(\tau x,\tau^{-1}y,z)\mid -\val(x)\leq \val(\tau) <\val(y)\},$$
that is, an annulus definably isomorphic to $\mathscr B(0,r)-\mathscr B(0,r')$ for some $r,r'\in\Gamma$, where $\mathscr B(0,r)$ denotes the non-archimedean closed ball centered at $0$ of valuative radius $r$. 

\begin{lemma}
$\tilde{X}_1$ is an object in the category {\rm $\vol\VF^{\bdd}[d_3]$}.
\end{lemma}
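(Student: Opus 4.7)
The goal is to present $\tilde X_1$ as a $\ka((t))$-definable subset of some $\VF^k \times \RV^\ell$ equipped with a volume form, admitting a finite-to-one map to $\VF^{d_3}$ with bounded image, which are precisely the data required for membership in $\vol\VF^{\bdd}[d_3]$.

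First I would establish $\ka((t))$-definability. The $G$-action uses scalars $\tau \in \mathbb{G}_m(\ka((t))^{\alg})$, so is not $\ka((t))$-definable a priori, but the orbit equivalence relation on $A$ is defined by the vanishing of the $G$-invariants $x_iy_j - x_i'y_j'$ together with $z = z'$, and is therefore $\ka((t))$-definable. I would then construct a $\ka((t))$-definable section by partitioning $X_1$ into finitely many pieces according to the smallest index $i$ with $\val(x_i)$ attaining the minimum $\val(x)$, and within each piece rescaling uniquely so that $\rv(x_i)$ equals a fixed element of $\RV$. This realizes $\tilde X_1$ as a $\ka((t))$-definable set.

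Next I would exhibit the structure required by the category $\vol\VF^{\bdd}[d_3]$. Since $f$ is invariant under weight $(1,-1,0)$, it factors as $f(x,y,z) = F(x_iy_j, z)$ in the $G$-invariants, so the condition $\rv(f) = \rv(t)$ depends only on $\rv(x_iy_j)$ and on $z$. Within each piece of the partition above, the orbit of $(x,y)$ is then determined by the $\RV$-data $\bigl(\rv(x_iy_j)\bigr)$ together with the normalization. Embedding $\tilde X_1$ into $\VF^{d_3} \times \RV^\ell$ via $(x,y,z) \mapsto \bigl(z,\rv(x_iy_j),\dots\bigr)$, the projection onto $\VF^{d_3}$ becomes finite-to-one on each piece. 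Boundedness of the image in $\VF^{d_3}$ follows from $\val(z) > 0$, which confines $z$ to the open unit ball, and the volume form is inherited from the standard form $dx\wedge dy\wedge dz$ on $\VF^d$ by descent along the chosen section, after accounting for the one-dimensional $G$-action.

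The main obstacle is verifying that the $\RV$-encoding genuinely separates distinct $G$-orbits within each piece, so that the map to $\VF^{d_3}$ is truly finite-to-one. This is where the model-theoretic structure of the Hrushovski--Kazhdan framework for $\ACVF(0,0)$ enters: one must carefully argue that after the partition and the normalization of the leading coordinate, no residual $\VF$-freedom remains, and each orbit is pinned down by its $z$-coordinate together with the $\rv$-invariants recorded in $\RV^\ell$.
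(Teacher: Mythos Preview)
Your sketch has a genuine gap at precisely the point you flag as the ``main obstacle,'' and the sketch does not close it. Normalizing so that $\rv(x_{i_0})$ equals a fixed element of $\RV$ does not select a unique representative of the $G$-orbit: it pins down the scalar $\tau$ only up to $1+\mathfrak m$, so a full $(1+\mathfrak m)$-worth of ambiguity remains. More seriously, your proposed encoding by $(z,\rv(x_iy_j))$ cannot separate $G$-orbits: the products $x_iy_j$ are honest $\VF$-valued invariants, and two points with the same $z$ and the same $\rv(x_iy_j)$ but different actual values $x_iy_j$ lie in distinct $G$-orbits. So the map to $\VF^{d_3}\times\RV^\ell$ you describe is not injective on $\tilde X_1$, and no finite-to-one projection to $\VF^{d_3}$ follows. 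The closing appeal to ``the model-theoretic structure of the Hrushovski--Kazhdan framework'' is a placeholder, not an argument.

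The paper takes a different route. Rather than constructing a section of $X_1\to\tilde X_1$, it replaces the $G$-action by the action of the subgroup $1+\mathfrak m$, forms the corresponding image $\tilde X'_1$, and argues that $\tilde X'_1$ and $\tilde X_1$ are in natural bijection and hence share the relevant categorical properties. The payoff is that $\VF^\times/(1+\mathfrak m)=\RV$, so after stratifying $(\VF^{d_1}\setminus\{0\})\times(\VF^{d_2}\setminus\{0\})$ by the support sets $I_1,I_2$ of nonvanishing coordinates, the $(1+\mathfrak m)$-quotient is described inside pieces of the form $\RV^{|I_1|+|I_2|}\times\VF^{d_3}$, on which the condition $\rv(\tilde f)=\rv(t)$ is read off directly and the projection to $\VF^{d_3}$ is the structural map. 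Your normalization implicitly reduces the $G$-freedom to $(1+\mathfrak m)$-freedom, which is a step in the same direction, but you then discard information by passing to $\rv(x_iy_j)$ instead of recording the $\rv$ of the individual coordinates as the paper does.
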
 

\begin{proof}
Let us consider the action of the multiplicative group $1+\mathfrak m$ on $A$ given by $\tau\cdot (x,y,z)=(\tau x,\tau^{-1}y,z)$. Let $\tilde{X}'_1$ be the set constructed in the same way as $\tilde{X}_1$ but with the $(1+\mathfrak m)$-action, i.e., an element of $\tilde{X}'_1$ is an orbit of the form
$$\tilde{\xi}'_{x,y,z}=\left\{(\tau x,\tau^{-1}y,z)\mid \val(\tau)=0\right\}.$$
Then the natural inclusions $\tilde{\xi}'_{x,y,z}\subset \tilde{\xi}_{x,y,z}$ induce a natural bijection between $\tilde{X}'_1$ and $\tilde{X}_1$. These two sets have the same properties, because they are made in the same method and $\tilde{\xi}'_{x,y,z}\mapsto \tilde{\xi}_{x,y,z}$ iff $\tilde{\xi}'_{x,y,z}\subset \tilde{\xi}_{x,y,z}$. So, it is enough to prove that $\tilde{X}'_1$ is in $\Ob\vol\VF^{\bdd}[d_3]$. By construction, one has 
$$A/(1+\mathfrak m)=(\VF^{d_1}-\{0\})/(1+\mathfrak m) \times (\VF^{d_2}-\{0\})/(1+\mathfrak m) \times \VF^{d_3}.$$
Note that, for $i\in\{1, 2\}$, $\VF^{d_i}-\{0\}$ is the disjoint union $\bigsqcup_I(\VF^{\times})^I$, where $I$ runs over all the nonempty subsets of $\{1,\dots,d_i\}$ and $(\VF^{\times})^I$ means the coordinates corresponding to $\{1,\dots,d_i\}-I$ are removed. Therefore
$$A/(1+\mathfrak m)=\bigsqcup_{\emptyset\not=I_i\subset\{1,\dots,d_i\}, i\in\{1,2\}}\RV^{I_1+I_2}\times\VF^{d_3}.$$
Let $\tilde{f}$ denote the function on $\tilde{X}'_1$ induced by $f$. Since the function $f$ is constant on each orbit $\tilde{\xi}'$, the intersection of $\tilde{X}'_1$ with the piece $\RV^{I_1+I_2}\times\VF^{d_3}$ can be described explicitly as follows
$$\left\{(x,y,z)\in \RV^{I_1+I_2}\times\VF^{d_3}\ 
\begin{array}{|l}
\val_{\rv}(x)\geq 0, \val_{\rv}(y)>0, \val(z)>0\\
\rv(\tilde{f}(x,y,z))=\rv(t)
\end{array}
\right\}.$$
This proves that $\tilde{X}'_1$ belongs to $\Ob\vol\VF^{\bdd}[d_3]$.
\end{proof}

For any $\tilde{\xi}\in\tilde{X}_1$, for any $(x,y,z)\in\tilde{\xi}$, the quantity $\val(x)+\val(y)$ depends only on $\tilde{\xi}$, not on the representative $(x,y,z)$. Thus we can consider the function $\tilde{\lambda}: \tilde{X}_1\to \Gamma_{>0}$ defined by $\tilde{\lambda}(\tilde{\xi})=\val(x)+\val(y)$ if $(x,y,z)$ is in $\tilde{\xi}$. Now by putting $\tilde{X}_{1,\gamma}:=\tilde{\lambda}^{-1}(\gamma)\subset \tilde{X}_1$, the composition of $\tilde{\lambda}$ with $\rho: X_1\to \tilde{X}_1$ yields a definable function $\lambda: X_1\to \Gamma_{>0}$ such that $\tilde{X}_{1,\gamma}$ is the image of $X_{1,\gamma}:=\lambda^{-1}(\gamma)$ under $\rho$. 

The following is a decomposition of $[X_{1,\gamma}]$ which is important in the end of this subsection. Let $\rho_{\gamma}=\rho|_{X_{1,\gamma}}$ and note that, for any $\gamma\in \Gamma_{>0}$, all the fibers of $\rho_{\gamma}^{-1}(\tilde{\xi})$, with $\tilde{\xi}$ varying in $\tilde{X}_{1,\gamma}$, are definably isomorphic to $A_{\gamma}$, where
$$A_{\gamma}:=\left\{u\in\VF \mid 0\leq \val(u)<\gamma\right\},$$ 
because they are annuli of the same modulus $\tilde{\lambda}(\tilde{\xi})=\gamma$. 
Hence the identity
\begin{align}\label{cachan4}
[X_{1,\gamma}]=[\tilde{X}_{1,\gamma}][A_{\gamma}]
\end{align} 
holds in $K(\vol\VF^{\bdd}[*])$.

Let us now consider the function 
$$\nu: \Gamma_{>0}\to \mathscr M_{\ka,\loc}^{\hat{\mu}}$$ 
defined by 
$$\nu(\gamma)=\hl([X_{1,\gamma}]).$$ 
It will be shown that $\nu$ is a definable function in the following sense, and one can take its integral over $\Gamma_{>0}$ in a reasonable way. To have an idea, let us go to a more general situation. Given a definable subset $\Gamma'$ of $\Gamma$, a function $\Gamma'\to \mathscr M_{\ka,\loc}^{\hat{\mu}}$ is called {\it definable} if the source $\Gamma'$ can be divided into finitely many disjoint definable subsets $\Gamma_i$, $i\in I$, such that its restriction to $\Gamma_i$ is constant for every $i\in I$. If $f$ is such a function and $c_i:=f|_{\Gamma_i}\in \mathscr M_{\ka,\loc}^{\hat{\mu}}$, then the integral $\int_{\Gamma'}fd\chi$ is defined as follows
\begin{align}
\int_{\Gamma'}fd\chi:=\sum_{i\in I}c_i\chi(\Gamma_i)\in \mathscr M_{\ka,\loc}^{\hat{\mu}},
\end{align}
where $\chi$ is the o-minimal Euler characteristic defined in \cite[Lem. 9.5]{HK} followed by the localization morphism $\loc$. 

\begin{lemma}\label{cachan6}
The function $\nu$ defined on $\Gamma_{>0}$ by the expression $\nu(\gamma)=\hl([X_{1,\gamma}])$ is a definable function. Moreover, the following identity holds 
\begin{align}\label{cachan3}
\int_{\Gamma_{>0}}\nu d\chi=\hl([X_1]).
\end{align}
\end{lemma}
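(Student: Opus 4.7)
My plan is to establish the two assertions of Lemma \ref{cachan6} by combining a pointwise vanishing computation with a Fubini-type identity. First I would compute $\nu \equiv 0$, which makes definability of $\nu$ immediate; then I would justify the integration formula (\ref{cachan3}) as an abstract Fubini identity that does not logically rely on the vanishing.

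For the pointwise vanishing, note that $\hl = \tilde{\Theta} \circ \Upsilon \circ \int$ is a composition of ring morphisms and hence itself a ring morphism on the graded ring $K(\vol\VF^{\bdd}[*])$. The decomposition (\ref{cachan4}) therefore yields
$$\hl([X_{1,\gamma}]) = \hl([\tilde{X}_{1,\gamma}]) \cdot \hl([A_{\gamma}]),$$
so it suffices to show $\hl([A_{\gamma}]) = 0$. Observe that $A_{\gamma} \setminus \{0\} = \L(V_{\gamma})$, where $V_{\gamma} := \{r \in \RV \mid 0 \leq \val_{\rv}(r) < \gamma\}$, so $\int([A_{\gamma}]) = [V_{\gamma}]$ in $K(\vol\RV^{\bdd}[1])/I'_{\sp}$. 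The exact sequence $0 \to \k^{\times} \to \RV \to \Gamma \to 0$ presents $V_{\gamma}$ as a $\k^{\times}$-bundle over the half-open interval $[0,\gamma) \subset \Gamma$, so that under $\Psi^{-1}$ the class $[V_{\gamma}]$ corresponds to a tensor whose $\Gamma$-factor is $[[0,\gamma)]$. Applying $\Upsilon$ via $\beta \otimes \alpha$, the $\alpha$-contribution yields $\chi([0,\gamma)) \cdot ([\mathbb{A}_{\ka}^1]-1)$, and the o-minimal Euler characteristic of a half-open interval vanishes. Hence $\hl([A_{\gamma}]) = 0$ and $\nu \equiv 0$, which is trivially a definable function on $\Gamma_{>0}$.

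For (\ref{cachan3}), I would apply o-minimal cell decomposition to the definable map $\lambda$: this partitions $\Gamma_{>0}$ into finitely many definable cells $C_j$ (points and open intervals) such that the family $\{X_{1,\gamma}\}_{\gamma \in C_j}$ is uniformly definable over each $C_j$. By additivity of $\hl$ the identity reduces to the fiberwise claim
$$\hl([\lambda^{-1}(C_j)]) = \chi(C_j) \cdot c_j, \quad c_j := \hl([X_{1,\gamma}]), \; \gamma \in C_j,$$
on each cell. Singletons are immediate. For an open interval $C_j$ this is a Fubini-type identity for $\hl$ along a $\Gamma$-parameter: through $\int$ and $\Psi^{-1}$, the $\Gamma$-factor of the decomposition of $[\lambda^{-1}(C_j)]$ contains the class $[C_j]$, and the $\alpha$-morphism converts this into a factor $\chi(C_j) \cdot ([\mathbb{A}_{\ka}^1]-1)$, while the remaining $\RES$- and $\Gamma$-contributions assemble into $c_j$. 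Summing over $j$ produces (\ref{cachan3}); combined with $\nu \equiv 0$, this recovers the desired $\hl([X_1]) = 0$.

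The hardest step is the rigorous derivation of the Fubini identity over an open cell: one needs a cell decomposition of $X_1$ relative to $\lambda$ that is fine enough for the $\RES$-component of $\int([\lambda^{-1}(C_j)])$ to be constant on each $C_j$, so that the $\alpha$-factor cleanly contributes $\chi(C_j)$. This is implicit in the construction of $\Upsilon$ via the tensor decomposition $\Psi$ and the morphism $\alpha$, but extracting it from the Hrushovski-Kazhdan machinery in $\ACVF(0,0)$ is where the technical work concentrates.
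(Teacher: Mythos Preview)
Your vanishing computation $\hl([A_\gamma])=0$ via $\chi([0,\gamma))=0$ is correct and in fact cleaner than the paper's Lemma~\ref{cachan5}, which routes through the $\hl_m$'s and a limit. Combined with (\ref{cachan4}) it yields $\nu\equiv 0$, so definability is immediate. But this collapses (\ref{cachan3}) to the bare assertion $\hl([X_1])=0$, which is the \emph{conclusion} the whole subsection is driving toward, not something the lemma may assume. You therefore still owe an independent proof of the Fubini identity, and that is where your proposal has a real gap.

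The problem is the line where you set $c_j:=\hl([X_{1,\gamma}])$ for $\gamma\in C_j$ and claim ``the remaining $\RES$- and $\Gamma$-contributions assemble into $c_j$.'' O-minimal cell decomposition of $\lambda$ gives a uniformly definable family $\{X_{1,\gamma}\}_{\gamma\in C_j}$, but uniform definability does not by itself imply constancy of $\hl$ along the cell; that constancy is precisely what has to be established. The paper does this concretely: it writes $\int[X_1]=\sum_\ell \Psi([W_{d-\ell}]\otimes[\Delta_\ell])+I'_{\sp}$ and similarly for each fiber, then uses the key observation that in $!K(\RES)$ the classes $[W'_{\gamma,d-\ell}]$ and $[W'_{d-\ell}]$ coincide, because the only difference between them is the extra valuation constraint coming from $\val(x)+\val(y)=\gamma$, and the ``!'' relation precisely forgets such $\Gamma$-data. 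This is the mechanism that makes the $\RES$-part $\gamma$-independent and lets the factor $\chi(\Gamma_\ell)$ split off cleanly as $\chi(\Delta_\ell)=c_\ell\chi(\Gamma_\ell)$. Your phrase ``the $\Gamma$-factor \dots\ contains the class $[C_j]$'' gestures at this but does not supply it. You rightly flag this step as ``where the technical work concentrates'' --- but that is exactly the content of the lemma, and it is not carried out in your proposal.
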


\begin{proof}
First, let us work with the morphisms of rings $\Psi$ and $\int$ corresponding to (\ref{eq7}) and (\ref{cachan}). We consider $X_1$ as a definable subset of $\VF^d\times\Gamma_{>0}$ by identifying each point $(x,y,z)$ with $(x,y,z,\val(x)+\val(y))$. Then $\int[X_1]\in K(\vol\RV^{\bdd}[d])/I'_{\sp}$ can be presented as follows
$$\int[X_1]=\sum_{\ell=1}^d\Psi([W_{d-\ell}]\otimes [\Delta_{\ell}])+I'_{\sp},$$
where $W_{d-\ell}\subset\RES^{d-\ell}$ and $\Delta_{\ell}\subset\Gamma_{>0}^{\ell}\times\Gamma_{>0}$, and they are bounded. By applying the morphism $\Upsilon$ in Subsection \ref{ups2012}, which is induced from the tensor product of morphism $\beta$ and $\alpha$ thanks to the property that $\ker(\beta\otimes\alpha)$ is contained in $\ker(\Psi)$, one obtains the following 
$$\Upsilon\left(\int[X_1]\right)=\sum_{\ell=1}^d\chi(\Delta_{\ell})[W_{d-\ell}]([\mathbb A_{\ka}^1]-1)^{\ell},$$
living in $!K(\RES)[[\mathbb A_{\ka}^1]^{-1}]_{\loc}$. For simplicity, we put $W'_{d-\ell}:=[W_{d-\ell}]([\mathbb A_{\ka}^1]-1)^{\ell}$, and the above formula is then 
\begin{align}\label{cachan1}
\Upsilon\left(\int[X_1]\right)=\sum_{\ell=1}^d\chi(\Delta_{\ell})[W'_{d-\ell}].
\end{align}

Similarly, by considering $X_{1,\gamma}$ as a definable subset of $\VF^d\times\Gamma_{>0}$, namely $X_{1,\gamma}$ is equal to $\{(x,y,z,\gamma) \mid (x,y,z)\in X_1, \val(x)+\val(y)=\gamma\}$, $\gamma\in\Gamma_{>0}$, we obtain the following 
$$\Upsilon\left(\int[X_{1,\gamma}]\right)=\sum_{\ell=1}^d\chi(\Delta_{\gamma,\ell})[W'_{\gamma,d-\ell}].$$
The elements $\Delta_{\gamma,\ell}$ and $W'_{\gamma,d-\ell}$ are respectively built in the same way as $\Delta_{\ell}$ and $W'_{d-\ell}$ above. Furthermore, although $W'_{d-\ell}$ and $W'_{\gamma,d-\ell}$ are different, their class in $!K(\RES)$ is the same thing. Indeed, $W'_{\gamma,d-\ell}$ is exactly $W'_{d-\ell}$ augmented by one condition on $\val$ (which comes from $\val(x)+\val(y)=\gamma$), but ``$!$'' means that such a difference on the $\val$-function may be ignored, and hence
\begin{align}\label{cachan2}
\Upsilon\left(\int[X_{1,\gamma}]\right)=\sum_{\ell=1}^d\chi(\Delta_{\gamma,\ell})[W'_{d-\ell}].
\end{align}

Now observe that, for any $\ell=1,\dots,d$, all $\Delta_{\gamma,\ell}$, with $\gamma$ running over some subset $\Gamma(\ell)\subset\Gamma_{>0}$, are in definable bijection. There is no bijection between $\Delta_{\gamma,\ell}$ and $\Delta_{\gamma',\ell'}$ if $\ell\not=\ell'$, as they are in different dimensions. We consider the following second projection
$$pr_2: \Gamma_{>0}^d\times \Gamma_{>0}\to \Gamma_{>0}.$$
For any $\ell=1,\dots,d$, if one identifies $\Gamma_{>0}^{\ell}\times \Gamma_{>0}$ in a natural way with a definable subset of $\Gamma_{>0}^d\times \Gamma_{>0}$ and if let $\Gamma_{\ell}$ be the image of $\Delta_{\ell}\subset \Gamma_{>0}^{\ell}\times \Gamma_{>0}$ through $pr_2$, then $\Gamma_{>0}$ is equal to the disjoint union $\bigsqcup_{\ell=1}^d\Gamma_{\ell}$. One also remarks that in fact $\Gamma_{\ell}=\Gamma(\ell)$ for $\ell=1,\dots,d$. In other words, we have deduced from (\ref{cachan2}) that the function $\nu(\gamma)=\hl([x_{1,\gamma}])$ is the constant $c_{\ell}\tilde{\Theta}\Big([W'_{d-\ell}]\Big)$, with $c_{\ell}:=\chi(\Delta_{\gamma,\ell})$, on each subset $\Gamma_{\ell}$, $\ell=1,\dots,d$, which proves the definability of $\nu$. Moreover, one has
$$\int_{\Gamma_{>0}}\nu d\chi= \sum_{\ell=1}^dc_{\ell}\chi(\Gamma_{\ell})\tilde{\Theta}\Big([W'_{d-\ell}]\Big),$$
where by the previous arguments one deduces that $\chi(\Delta_{\ell})=c_{\ell}\chi(\Gamma_{\ell})$, $\ell=1,\dots,d$. This together with (\ref{cachan1}) implies (\ref{cachan3}).
\end{proof}

Now thanks to (\ref{cachan4}) one deduces that $\nu(\gamma)=\hl([\tilde{X}_{1,\gamma}])\hl([A_{\gamma}])$ for $\gamma\in\Gamma_{>0}$. As proved in Lemma \ref{cachan5} below, $\hl([A_{\gamma}])=0$ for all $\gamma\in\Gamma_{>0}$, thus $\nu(\gamma)=0$ for all $\gamma\in\Gamma_{>0}$. This together with Lemma \ref{cachan6} implies $\hl([X_1])=0$.

\begin{lemma}\label{cachan5}
For any $\gamma\in\Gamma_{>0}$, $\hl([A_{\gamma}])=0$.
\end{lemma}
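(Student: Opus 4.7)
The plan is to trace $[A_\gamma]$ through the composition $\hl = \tilde\Theta \circ \Upsilon \circ \int$ and observe that the class is annihilated already at the $\Upsilon$ stage, because the o-minimal Euler characteristic of the half-open interval $[0,\gamma) \subset \Gamma$ vanishes.

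First I would identify $A_\gamma$ with $\L(A_\gamma^{\RV})$, where $A_\gamma^{\RV} := \val_{\rv}^{-1}([0,\gamma)) \subset \RV$ is endowed with its tautological inclusion into $\RV$ as the finite-to-one structural map. Since $\L(A_\gamma^{\RV}) = \{y \in \VF \mid \rv(y) \in A_\gamma^{\RV}\} = A_\gamma$, the characterizing property of $\int$ recalled around (\ref{cachan}) yields $\int[A_\gamma] = [A_\gamma^{\RV}]$ in $K(\vol\RV^{\bdd}[1])/I'_{\sp}$.

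Next I would lift $[A_\gamma^{\RV}]$ through $\Psi$ to the tensor $K(\vol\RES[*]) \otimes K(\vol\Gamma^{\bdd}[*])$. Reading the generator $[\val_{\rv}^{-1}(\delta)]_1 \otimes 1 - 1 \otimes [\delta]_1$ of $\ker\Psi$ parametrically over $\delta \in [0,\gamma)$ gives $\Psi(1 \otimes [[0,\gamma)]_1) = [A_\gamma^{\RV}]$, with $[[0,\gamma)]_1 \in K(\vol\Gamma^{\bdd}[1])$ legitimate because $[0,\gamma)$ is two-sided bounded. Since $\Upsilon$ is obtained by factoring $\beta \otimes \alpha$ through $\Psi$, one concludes
$$\Upsilon([A_\gamma^{\RV}]) \;=\; \beta(1)\cdot\alpha\bigl([[0,\gamma)]_1\bigr) \;=\; \chi([0,\gamma))\bigl([\mathbb A_{\ka}^1]-1\bigr).$$

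The final step is the elementary computation $\chi([0,\gamma)) = \chi(\{0\}) + \chi((0,\gamma)) = 1 + (-1) = 0$, using additivity of the o-minimal Euler characteristic together with the standard values on points and open intervals. Thus $\Upsilon([A_\gamma^{\RV}]) = 0$, and applying $\tilde\Theta$ gives $\hl([A_\gamma]) = 0$. I do not expect a substantial obstacle; the only mild delicacy is justifying the $\Psi$-lift, which is precisely what the $\ker\Psi$-relation encodes in the presence of the uniform fibration $\val_{\rv} : A_\gamma^{\RV} \to [0,\gamma)$ with constant fiber $\k^\times$.
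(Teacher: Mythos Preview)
Your argument is correct and takes a genuinely different route from the paper's proof. The paper proceeds computationally: it writes $\gamma = p/q$, computes each $\hl_m([A_\gamma])$ explicitly via the arc-space description $A_\gamma[m;2]$ from Proposition~\ref{eq010}(i), assembles the generating series $\sum_{m\geq 1}\hl_m([A_\gamma])T^m$ as a difference of two geometric-type series, and then takes $\lim_{T\to\infty}$ to obtain $0$. You instead short-circuit the whole generating-function step by observing that $A_\gamma = \L\bigl(\val_{\rv}^{-1}([0,\gamma))\bigr)$, so $\int[A_\gamma] = [\val_{\rv}^{-1}([0,\gamma))] + I'_{\sp} = \Psi(1\otimes[[0,\gamma)]_1) + I'_{\sp}$, and then use $\Upsilon\circ\Psi = \beta\otimes\alpha$ to read off $\Upsilon$ directly as $\chi([0,\gamma))([\mathbb A_{\ka}^1]-1)$, which vanishes because a half-open interval has o-minimal Euler characteristic zero.

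Your approach is cleaner and more conceptual: it isolates the one structural reason the lemma holds, namely $\chi([0,\gamma))=0$, and it works uniformly in $\gamma$ without any case distinction on divisibility. The paper's approach, while heavier, has the virtue of exercising Proposition~\ref{eq010} concretely and exhibiting the arc-space interpretation; it also avoids having to think about whether $\Upsilon$ is well defined modulo $I'_{\sp}$, since it works with the $\tilde h_m$ individually (for which the descent is recorded just before Subsection~\ref{ups2012}). In your write-up that last point is harmless---your representative $[\val_{\rv}^{-1}([0,\gamma))]$ lies in $K(\vol\RV^{2\bdd}[1])$ and the identity~(\ref{ooo}) together with the compatibility of $h_m$ with the $I'_{\sp}$-quotient after localization guarantees that the value $\Upsilon([A_\gamma^{\RV}])=0$ really is $\hl([A_\gamma])$---but it would strengthen the exposition to say one sentence to that effect.
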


\begin{proof}
First, we note that $A_{\gamma}=\{u\in\VF \mid 0\leq \val(u)<\gamma\}$ and it satisfies the assumption of Proposition \ref{eq010}; namely, $A_{\gamma}$ is bounded, $\ka((t))$-definable in $\VF$ and 2-invariant. By writing $\gamma=p/q$ with $(p,q)=1$, we have the following:

If $n\in \mathbb N_{>0}$ is not divisible by $q$, then $\hl_n([A_{p/q}])=0$.

If $n=mq$, $m\in \mathbb N_{>0}$, then, by Proposition \ref{eq010},
\begin{align*}
\HL_{mq}([A_{p/q}])&=\left[\left\{\varphi(t)\in \ka[t]/t^{2mq} \mid 0\leq \ord_t\varphi(t)<mp\right\}\right][\mathbb A_{\ka}^1]^{-2mq+1}\\
&=[\mathbb A_{\ka}^1]-[\mathbb A_{\ka}^1]^{-mr+1},
\end{align*}
where $r=\min\{p,2q\}$. Therefore,
\begin{align*}
\hl([A_{p/q}])&=-\lim_{T\to\infty}\sum_{m\geq 1}\hl_{mq}([A_{p/q}])T^{mq}\\
&=-[\mathbb A_{\ka}^1]\lim_{T\to\infty}\left(\frac{T^q}{1-T^q}-\frac{[\mathbb A_{\ka}^1]^{-r}T^q}{1-[\mathbb A_{\ka}^1]^{-r}T^q}\right)\\
&=0.
\end{align*}
This proves the lemma.
\end{proof}


\section{Motivic integration on rigid varieties}\label{sec6}
Let $K$ be a non-archimedean complete discretely valued field of equal characteristics zero, with valuation ring $R$ and residue field $\ka$. We fix a uniformizing parameter $\varpi$ in $R$ 

\subsection{Motivic integration for a gauge form}
If $\mathfrak X$ is a separated generically smooth formal $R$-scheme topologically of finite type, motivic integration on $\mathfrak X$ was constructed by Sebag \cite{Se} and enriched by Loeser-Sebag \cite{LS} and Nicaise-Sebag \cite{NS, NS1, NS2}. We can refer to \cite[Subsec. 6.1]{NS} for definition of motivic integration of a gauge form $\omega$ on $\mathfrak X_{\eta}$, which takes values in $\mathscr M_{\mathfrak X_0}$, denoted by $\int_{\mathfrak X}|\omega|$. 

A more direct way to define $\int_{\mathfrak X}|\omega|$ is as follows. Let us consider $\mathfrak X$ as the inductive limit of the $R_m$-schemes topologically of finite type $\mathscr X_m=(\mathfrak X, \mathcal O_{\mathfrak X}\otimes_RR_m)$ in the category of formal $R$-schemes, where $R_m$ denotes $R/(\varpi)^{m+1}$. By Greenberg \cite{Gr}, the functor $\mathscr Y\mapsto \Hom_{R_m}(\mathscr Y\times_{\ka}R_m,\mathscr X_m)$ from the category of $\ka$-schemes to the category of sets is presented by a $\ka$-scheme $\Gr_m(\mathscr X_m)$ topologically of finite type such that, for every $\ka$-algebra $\mathcal A$, $\Gr_m(\mathscr X_m)(\mathcal A)=\mathscr X_m(\mathcal A\otimes_{\ka}R_m)$. Then one can take the projective limit $\Gr(\mathfrak X)$ of the system $(\Gr_m(\mathscr X_m))_{m\in \mathbb N}$ in the category of $\ka$-schemes (cf. \cite{Gr}, \cite{Se}, \cite{LS}). It was proved in \cite{Gr} that the functor {\rm $\Gr$} respects open and closed immersions and fiber products, and sends affine topologically of finite type formal $R$-schemes to affine $\ka$-schemes. Notice that the notions of piecewise trivial fibration, cylindrical stable subset of $\Gr(\mathfrak X)$ in this context were already introduced in \cite{Se} and \cite{LS}, in which the latter requires in addition $\mathfrak X$ flat. Let $\mathbf C_{0,\mathfrak X}$ be the set of stable cylindrical subsets of $\Gr(\mathfrak X)$ of some level. 

\begin{proposition}\label{stmande}
There exists a unique additive morphism $\tilde{\mu}: \mathbf C_{0,\mathfrak X}\to \mathscr M_{\mathfrak X_0}$ given by $\tilde{\mu}(A)=[\pi_m(A)][\mathbb A_{\mathfrak X_0}^1]^{-(m+1)d}$ for $A$ a stable cylinder of level $m$, where $d$ is the relative dimension of $\mathfrak X$ and $\pi_m$ is the canonical projection $\Gr(\mathfrak X)\to\Gr_m(\mathscr X_m)$. 
\end{proposition}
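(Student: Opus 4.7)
The plan is to establish, in order: uniqueness of $\tilde{\mu}$, independence of the candidate value from the level chosen to represent $A$, and additivity on disjoint unions. Uniqueness is immediate, because by the very definition of $\mathbf{C}_{0,\mathfrak X}$ every element admits a presentation as a stable cylinder at some finite level $m$, so any additive map that matches the prescribed formula on each level is completely pinned down.

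The key step is well-definedness. If $A$ is stable of level $m$, then $A$ is also a stable cylinder at every level $n\geq m$, and one must show that
\[
[\pi_n(A)][\mathbb A_{\mathfrak X_0}^1]^{-(n+1)d}
\]
in $\mathscr M_{\mathfrak X_0}$ does not depend on the choice of such $n$. By the very definition of stability adopted in \cite{Se} and \cite{LS}, for $n\geq m$ the truncation map $\pi_{n+1,n}\colon \Gr_{n+1}(\mathscr X_{n+1})\to \Gr_n(\mathscr X_n)$ restricts over $\pi_n(A)$ to a piecewise trivial fibration $\pi_{n+1}(A)\to \pi_n(A)$ with fiber $\mathbb A^d_{\ka}$, where $d$ is the relative dimension of $\mathfrak X$. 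The $\mathfrak X_0$-structure is the one induced by the composed reduction $\Gr_n(\mathscr X_n)\to \Gr_0(\mathscr X_0)=\mathfrak X_0$, and the piecewise trivial fibration respects this structure because it arises from a morphism of formal $R$-schemes. In $K(\Var_{\mathfrak X_0})$ this gives $[\pi_{n+1}(A)]=[\pi_n(A)][\mathbb A_{\mathfrak X_0}^1]^d$, and multiplying by $[\mathbb A_{\mathfrak X_0}^1]^{-(n+2)d}$ shows that the candidate values at consecutive levels agree; iterating covers all $n\geq m$.

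For additivity, given disjoint $A, B \in \mathbf{C}_{0,\mathfrak X}$, one passes to a common level $m$ at which both are stable. The cylindricity conditions $A=\pi_m^{-1}(\pi_m(A))$ and $B=\pi_m^{-1}(\pi_m(B))$ together with $A\cap B=\emptyset$ force $\pi_m(A)\cap \pi_m(B)=\emptyset$ as constructible subsets of $\Gr_m(\mathscr X_m)$. Hence $[\pi_m(A\sqcup B)]=[\pi_m(A)]+[\pi_m(B)]$ in $K(\Var_{\mathfrak X_0})$, and multiplication by the common normalizing factor $[\mathbb A_{\mathfrak X_0}^1]^{-(m+1)d}$ yields $\tilde\mu(A\sqcup B)=\tilde\mu(A)+\tilde\mu(B)$.

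The main obstacle is precisely the piecewise triviality of the restricted truncation $\pi_{n+1}(A)\to \pi_n(A)$ when $\mathfrak X$ is singular along $\mathfrak X_0$; on the whole of $\Gr(\mathfrak X)$ this fails, and is controlled only on stable cylinders. This is the analogue, in the formal-scheme setting via the Greenberg functor of \cite{Gr}, of the classical Denef--Loeser computation on the arc space of a smooth algebraic variety, and is carried out in \cite{Se} by means of a Jacobian estimate for morphisms of topologically finitely presented formal $R$-schemes, which is exactly what the notion of stability has been designed to encode.
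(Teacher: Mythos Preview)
Your proof is correct and follows essentially the same approach as the paper's own proof. The paper invokes \cite[Lem.~4.3.25]{Se} directly to obtain $[\pi_n(A)]=[\pi_m(A)][\mathbb A_{\mathfrak X_0}^1]^{(n-m)d}$ and then declares additivity ``obvious by definition,'' whereas you unpack the content of that lemma (the piecewise trivial $\mathbb A^d$-fibration encoded in stability) and spell out the passage to a common level for additivity; the logical skeleton is the same.
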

\begin{proof}
By \cite[Lem. 4.3.25]{Se}, one has $[\pi_n(A)]=[\pi_m(A)][\mathbb A_{\mathfrak X_0}^1]^{(n-m)d}$, where $[\pi_m(A)]$ is the class of $\pi_m(A)\to\mathfrak X_0$ in $\mathscr M_{\mathfrak X_0}$. Thus
$$[\pi_n(A)][\mathbb A_{\mathfrak X_0}^1]^{-(n+1)d}=[\pi_m(A)][\mathbb A_{\mathfrak X_0}^1]^{-(m+1)d}$$
for any $n\geq m$. Because $A$ is a stable cylinder of level $m$, it is stable of level $n\geq m$, thus $\tilde{\mu}$ is well defined. The additivity of $\tilde{\mu}$ is obvious by definition.
\end{proof}

Given a cylinder $A$ in $\mathbf C_{0,\mathfrak X}$ and a function $\alpha: A\to \mathbb Z \cup \{\infty\}$ taking only a finite number of values with $\alpha^{-1}(m)$ in $\mathbf C_{0,\mathfrak X}$ for all $m$. Then one puts
$$\int_A[\mathbb A_{\mathfrak X_0}^1]^{-\alpha}d\tilde{\mu}:=\sum_{m\in\mathbb Z}\tilde{\mu}(\alpha^{-1}(m))[\mathbb A_{\mathfrak X_0}^1]^{-m}.$$
If $\omega$ is a gauge form on $\mathfrak X_{\eta}$, it admits by \cite{LS} an integer-valued function $\ord_{\varpi, \mathfrak X}(\omega)$ on $\Gr(\mathfrak X)$ satisfying the previous properties as $\alpha$. We are now able to define $\int_{\mathfrak X}|\omega|$ to be the integral $\int_{\Gr(\mathfrak X)}[\mathbb A_{\mathfrak X_0}^1]^{-\ord_{\varpi, \mathfrak X}(\omega)}d\tilde{\mu}$, which belongs to $\mathscr M_{\mathfrak X_0}$.

The image of $\int_{\mathfrak X}|\omega|$ under the forgetful morphism $\mathscr M_{\mathfrak X_0}\to \mathscr M_{\ka}$ only depends on $\mathfrak X_{\eta}$, not on $\mathfrak X$, and it is exactly the motivic integral $\int_{\mathfrak X_{\eta}}|\omega|$ defined in \cite[Thm.-Def. 4.1.2]{LS}. We shall also denote in this article the image by $\int_{\mathfrak X_{\eta}}|\omega|$.

\begin{remark}\label{wedding}
If $\mathfrak X$ is a separated generically smooth formal $R$-scheme topologically of finite type, then $\mathfrak X_{\eta}$ is a separated quasi-compact smooth rigid $K$-variety. In fact, Loeser-Sebag in \cite[Thm.-Def. 4.1.2]{LS} defined $\int_X|\omega|$ for any quasi-compact smooth rigid $K$-variety $X$ and any differential form of maximal degree $\omega$ on $X$.
\end{remark}

Generically smooth special formal $R$-schemes have been considered in \cite{NS2}. The theory of motivic integration has been systematically developed in that context by Nicaise in \cite{Ni2}. Firstly, let $\mathfrak X$ be a special formal $R$-scheme, not necessarily generically smooth. By a {\it N\'eron smoothening} for $\mathfrak X$ we mean a morphism of special formal $R$-schemes $\mathfrak Y \to\mathfrak X$, with $\mathfrak Y$ adic smooth over $R$, which induces an open embedding $\mathfrak Y_{\eta} \to \mathfrak X_{\eta}$ satisfying $\mathfrak Y_{\eta}(\tilde{K})=\mathfrak X_{\eta}(\tilde{K})$ for any finite unramified extension $\tilde{K}$ of $K$. By \cite{Ni2}, if in addition $\mathfrak X$ is generically smooth, it obviously admits a N\'eron smoothening $\mathfrak Y\to\mathfrak X$, and in this case one can even choose $\mathfrak Y$ to be a separated generically smooth formal $R$-scheme topologically of finite type. Keeping the situation, due to \cite[Prop. 4.7, 4.8]{Ni2}, one defines 
$$\int_{\mathfrak X}|\omega|:= \int_{\mathfrak Y}|\omega|, \ \text{and}\ \int_{\mathfrak X_{\eta}}|\omega|:= \int_{\mathfrak Y_{\eta}}|\omega|$$
for a gauge form $\omega$ on $\mathfrak X_{\eta}$. Of course, these are well defined, also by \cite{Ni2}. Note that the integral $\int_{\mathfrak X}|\omega|$ lives in $\mathscr M_{\mathfrak X_0}$, while $\int_{\mathfrak X_{\eta}}|\omega|$ belongs to $\mathscr M_{\ka}$.

\subsection{Bounded smooth rigid varieties}
If $\mathfrak X$ is a special formal $R$-scheme, the generic fiber $\mathfrak X_{\eta}$ is a {\it bounded} rigid $K$-variety. According to Nicaise-Sebag \cite{NS2}, generally, a rigid $K$-variety $X$ is bounded if there exists a quasi-compact open subspace $Y$ of $X$ such that $Y(\tilde{K})=X(\tilde{K})$ for any finite unramified extension $\tilde{K}$ of $K$. If the rigid variety $X$ is smooth, so is $Y$. The motivic integration on a quasi-compact smooth rigid variety was already defined by Loeser-Sebag \cite{LS} (also mentioned in Remark \ref{wedding}), and inspired by this, Nicaise-Sebag \cite{NS2} extend the notion to bounded smooth rigid $K$-varieties.  Their definition is as follows 
$$\int_X|\omega|:=\int_Y|\omega|\ \in \mathscr M_{\ka},$$ 
where $\omega$ is a gauge form on $X$. The integral is well defined, due to \cite[Prop. 5.9]{NS2}.

Given a natural $d$. Let $\GBSRig_K^d$ be the category of gauged bounded smooth rigid $K$-varieties of dimension $d$, i.e., each object of $\GBSRig_K^d$ is a pair $(X,\omega)$ with $X$ a bounded smooth rigid $K$-variety of dimension $d$ and $\omega$ a gauge form on $X$, and each morphism $h:(X',\omega')\to (X,\omega)$ in $\GBSRig_K^d$ is a morphism of bounded smooth rigid $K$-varieties $h:X'\to X$ such that $h^*\omega=\omega'$. The Grothendieck group $K(\GBSRig_K^d)$ is the quotient of the free abelian group generated by symbols $[X,\omega]$ with $(X,\omega)$ an object of $\Ob\GBSRig_K^d$ by the relations $[X',\omega']=[X,\omega]$ if $(X',\omega')\cong (X,\omega)$ in $\GBSRig_K^d$, and 
$$[X,\omega]=\sum_{\emptyset\not=I\subset J}(-1)^{|I|-1}[O_I,\omega|_{O_I}],$$
whenever $(O_i)_{i\in J}$ is a finite admissible covering of $X$, $O_I=\bigcap_{i\in I}O_i$ for any $I\subset J$. One puts 
$$K(\GBSRig_K):=\bigoplus_{d\geq 0}K(\GBSRig_K^d)$$
and defines a product on it as follows $[X,\omega]\cdot[X',\omega']:=[X\times X',\omega\times\omega']$. Together with this product, the Grothendieck group $K(\GBSRig_K)$ becomes a ring.

\begin{proposition}\label{gsr}
There is a unique morphism of rings $\Phi: K(\GBSRig_K)\to \mathscr M_{\ka}$ such that $\Phi([X,\omega])=\int_X|\omega|$.
\end{proposition}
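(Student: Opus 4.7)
The plan is to define $\Phi$ on the free generators $[X,\omega]$ of $K(\GBSRig_K)$ by setting $\Phi([X,\omega]) := \int_X |\omega| \in \mathscr M_{\ka}$, and then verify that this assignment respects the three defining relations: invariance under isomorphisms in $\GBSRig_K^d$, the inclusion--exclusion relation attached to a finite admissible covering, and compatibility with the product. Uniqueness is then automatic since these generators span the Grothendieck ring.

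The isomorphism relation is essentially formal. If $h\colon (X',\omega')\to(X,\omega)$ is an isomorphism in $\GBSRig_K^d$, then $h^*\omega=\omega'$ by definition, and by passing to a common separated generically smooth formal $R$-model (via the boundedness of $X$, $X'$) the identity $\int_{X'}|\omega'|=\int_X|\omega|$ follows from the compatibility of the Greenberg functor $\Gr$ with isomorphisms, hence of the measure $\tilde{\mu}$ and of the order function $\ord_{\varpi,\mathfrak X}(\omega)$.

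The key step is the inclusion--exclusion relation for a finite admissible covering $(O_i)_{i\in J}$ of $X$. I would first reduce to the quasi-compact case by passing to a witness $Y\subset X$ of boundedness, observing that $(O_i\cap Y)$ is then an admissible covering of $Y$ and that $O_I\cap Y$ witnesses the boundedness of $O_I$, so that $\int_{O_I}|\omega|_{O_I}|$ is computed on $O_I\cap Y$. After possibly refining by admissible formal blow-ups, one can choose a separated generically smooth formal $R$-model $\mathfrak Y$ of $Y$ together with open formal subschemes $\mathfrak O_i\subset\mathfrak Y$ such that $(\mathfrak O_i)_\eta = O_i\cap Y$. Then the desired relation reduces to the set-theoretic inclusion--exclusion of the stable cylinders $\Gr(\mathfrak O_i)\subset\Gr(\mathfrak Y)$, combined with the additivity of $\tilde{\mu}$ provided by Proposition \ref{stmande} and the fact that $\ord_{\varpi,\mathfrak Y}(\omega)$ restricts correctly to each $\Gr(\mathfrak O_i)$.

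Finally, compatibility with the product $[X,\omega]\cdot[X',\omega']=[X\times X',\omega\times\omega']$ amounts to a Fubini-type identity $\int_{X\times X'}|\omega\times\omega'|=\int_X|\omega|\cdot\int_{X'}|\omega'|$. Picking generically smooth formal models $\mathfrak Y$, $\mathfrak Y'$ of witnesses of boundedness, one uses the canonical isomorphism $\Gr(\mathfrak Y\times_R\mathfrak Y')\cong \Gr(\mathfrak Y)\times_{\ka}\Gr(\mathfrak Y')$ together with the additivity of the order function $\ord_{\varpi}(\omega\wedge\omega')=\ord_{\varpi}(\omega)+\ord_{\varpi}(\omega')$ to conclude. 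The main obstacle I anticipate is the lifting step in the inclusion--exclusion: ensuring that an admissible covering on the rigid side can be realized, after suitable admissible formal blow-ups, as an open covering of a formal model so that Proposition \ref{stmande} applies cleanly. Once this lifting is in place, the remaining verifications are essentially bookkeeping.
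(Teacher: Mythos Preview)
Your proposal is correct and takes essentially the same approach as the paper: define $\Phi$ on generators and verify the isomorphism, inclusion--exclusion, and product relations. The paper's proof is shorter only because it defers both the additivity and the Fubini identity to \cite[Prop.~4.2.1]{LS} (after reducing to quasi-compact witnesses of boundedness, exactly as you do), which is precisely the content you unpack by hand; in particular, your anticipated ``main obstacle'' of lifting an admissible covering to a formal model is what that reference already takes care of.
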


\begin{proof}
For $[X,\omega]$ in $K(\GBSRig_K)$, we set $\Phi([X,\omega])=\int_X|\omega|$. The additivity of $\Phi$ is clear by the definition of $\int_X|\omega|$ and \cite[Prop. 4.2.1]{LS}. Let $[X,\omega]$ and $[X',\omega']$ be in $K(\GBSRig_K)$. If $Y$ and $Y'$ are quasi-compact smooth rigid $K$-varieties such that $Y(\tilde{K})=X(\tilde{K})$ and $Y(\tilde{K})=X(\tilde{K})$ for any finite unramified extension $\tilde{K}$ of $K$, then so is $(Y\times Y')(\tilde{K})=(X\times X')(\tilde{K})$, since $(Y\times Y')(\tilde{K})=Y(\tilde{K})\times Y'(\tilde{K})$ and $(X\times X')(\tilde{K})=X(\tilde{K})\times X'(\tilde{K})$. We deduce from definition and \cite[Prop. 4.2.1]{LS} that 
\begin{align*}
\int_{X\times X'}|\omega\times \omega'|=\int_{Y\times Y'}|\omega\times \omega'|=\int_{Y}|\omega|\cdot \int_{Y'}|\omega'|=\int_X|\omega|\cdot \int_{X'}|\omega'|.
\end{align*}
This finishes the proof.
\end{proof}


\subsection{Motivic volume}\label{tetdenroi}
For $m\geq 1$, let $K(m):=K[T]/(T^m-\varpi)$ be a totally ramified extension of degree $m$ of $K$, and $R(m):=R[T]/(T^m-\varpi)$ the normalization of $R$ in $K(m)$. If $\mathfrak X$ is a formal $R$-scheme, we define $\mathfrak X(m):=\mathfrak X\times_RR(m)$ and $\mathfrak X_{\eta}(m):=\mathfrak X_{\eta}\times_KK(m)$. If $\omega$ is a differential form on $\mathfrak X_{\eta}$, we denote by $\omega(m)$ the pullback of $\omega$ via the natural morphism $\mathfrak X_{\eta}(m)\to \mathfrak X_{\eta}$.

Let $\mathfrak X$ be a generically smooth special formal $R$-scheme, $\omega$ a gauge form on $\mathfrak X_{\eta}$. The volume Poincar\'e series of $(\mathfrak X,\omega)$ is defined to be an element of $\mathscr M_{\mathfrak X_0}[[T]]$ as
\begin{align}\label{eq103}
S(\mathfrak X,\omega; T):=\sum_{m\geq 1}\left(\int_{\mathfrak X(m)}|\omega(m)|\right)T^m.
\end{align}
This power series has been introduced and studied first in \cite{NS} in the context of generically smooth separated formal schemes topologically of finite type over $R$; in \cite{Ni2} its study has been extended in the framework of generically smooth special formal $R$-schemes.

\begin{remark}
In general, by \cite[Rmk. 4.10]{Ni2}, the volume Poincar\'e series $S(\mathfrak X,\omega; T)$ depends on the choice of $\varpi$, i.e., on the $K$-fields $K(m)$. In the case where $\ka$ is algebraically closed, however, $K(m)$ is the unique extension of degree $m$ of $K$, up to $K$-isomorphism, thus $S(\mathfrak X,\omega; T)$ is independent of the choice of $\varpi$. 

For simplicity (and also in our case), we prove this for $R=\ka[[t]]$ and $K=\ka((t))$. If $t'$ is another uniformizing parameter for $\ka[[t]]$, then $t'=\alpha t$ with $\alpha=\alpha(t)\in\ka[[t]]$ and $\alpha(0)\in\ka^{\times}$. Since $\ka$ is algebraically closed, all the $m$th roots of an element of $\ka$ belong to $\ka$, thus all the $m$th roots of $\alpha$ are in $\ka[[t]]$. Pick one among them and denote this element by $\alpha^{1/m}$. The correspondence $t^{1/m}\mapsto \alpha^{1/m}t^{1/m}$ defines a canonical isomorphism of $\ka((t))$-fields $\ka((t^{1/m}))\to \ka((t^{\prime 1/m}))$. Thus, for each $m\in\mathbb N_{>0}$, $\int_{\mathfrak X(m)}|\omega(m)|$ (and hence $S(\mathfrak X,\omega; T)$) is independent of the choice of the uniformizing parameter $t$.
\end{remark}

Using resolution of singularities, Nicaise \cite[Cor. 7.13]{Ni2} proved that, with the previous hypotheses, and in addition the $\omega$ being $\mathfrak X$-bounded (see \cite[Def. 2.11]{Ni2} for definition), the volume Poincar\'e series of $(\mathfrak X,\omega)$ is a rational series. In this case, the limit 
$$S(\mathfrak X,\widehat{K^s}):=-\lim_{T\to\infty}S(\mathfrak X,\omega;T)$$ 
is called the {\it motivic volume} of $\mathfrak X$, and its image under the forgetful morphism, $S(\mathfrak X_{\eta},\widehat{K^s})$, which obviously admits the indentity
$$S(\mathfrak X_{\eta},\widehat{K^s})=-\lim_{T\to \infty}\sum_{m\geq 1}\Phi\Big([\mathfrak X_{\eta}(m),\omega(m)]\Big)T^m,$$
is called the {\it motivic volume} of $\mathfrak X_{\eta}$. The latter is a quantity in $\mathscr M_{\ka}$.

Let $K(\BSRig_K)$ be the Grothendieck ring of the category $\BSRig_K$ of bounded smooth rigid $K$-varieties. It obtains from $K(\GBSRig_K)$ in Proposition \ref{gsr} by forgetting gauge forms.

\begin{proposition}\label{cor3}
There exists a homomorphism of additive groups {\rm 
$$\SE: K(\BSRig_K)\to \mathscr M_{\ka}$$}such that {\rm $\SE([\mathfrak X_{\eta}])=S(\mathfrak X_{\eta},\widehat{K^s})$} for $\mathfrak X$ a generically smooth special formal $R$-scheme. 
\end{proposition}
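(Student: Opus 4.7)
The plan is to define $\SE$ on generators $[X]$ of $K(\BSRig_K)$ by passing through a formal model of $X$ and the volume Poincar\'e series, then to verify well-definedness together with the scissor relations for admissible coverings. Concretely, given a bounded smooth rigid $K$-variety $X$, I would first exploit boundedness to pick a quasi-compact open subspace $Y \subseteq X$ with $Y(\tilde K) = X(\tilde K)$ for every finite unramified extension $\tilde K/K$. By Raynaud's theorem $Y$ admits a separated formal $R$-model $\mathfrak Y$ topologically of finite type; smoothness of $Y$ forces $\mathfrak Y$ to be generically smooth, so $\mathfrak Y$ is a generically smooth special formal $R$-scheme with $\mathfrak Y_\eta = Y$. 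After possibly passing to an admissible blowup one chooses an $\mathfrak Y$-bounded gauge form $\omega$ on $Y$, and invokes Nicaise \cite[Cor. 7.13]{Ni2} so that
$$\SE([X]) := -\lim_{T\to\infty}\sum_{m\geq 1}\Phi\bigl([\mathfrak Y_\eta(m),\omega(m)]\bigr)T^m \in \mathscr M_\ka$$
is well-defined as the limit of a rational series.

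I would then check well-definedness of this value in three layers. Independence of $\omega$ follows from the intrinsic character of the motivic volume $S(\mathfrak Y,\widehat{K^s})$ in Nicaise's framework. Independence of the formal model $\mathfrak Y$ of $Y$ reduces, via a common dominating admissible blowup, to the change-of-variables formula for motivic integration on generically smooth special formal $R$-schemes. Independence of the quasi-compact subspace $Y$ itself uses that $Y \cap Y'$ also satisfies the N\'eron property for any two valid choices $Y,Y'$, so that the three associated Poincar\'e series coincide term by term for every $m$, and hence have the same limit. Compatibility with the motivic volume is then immediate: if $X = \mathfrak X_\eta$ for a generically smooth special formal $R$-scheme $\mathfrak X$, any N\'eron smoothening $\mathfrak Y \to \mathfrak X$ with $\mathfrak Y$ adic smooth is a legitimate choice for $Y$, and the very definition of motivic integration on $\mathfrak X$ via $\mathfrak Y$ yields $\SE([\mathfrak X_\eta]) = S(\mathfrak X_\eta,\widehat{K^s})$.

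For the scissor relations, given a finite admissible covering $X = \bigcup_{i\in J} O_i$ of bounded smooth rigid $K$-varieties, I would first refine so that all $O_I = \bigcap_{i\in I}O_i$ arise as generic fibers of open formal subschemes of a single generically smooth formal model carrying a common bounded gauge form. The standard additivity of the motivic integral, used already in Proposition \ref{gsr}, then yields inclusion-exclusion at the level of $\Phi([O_I(m),\omega(m)])$ for every $m$, and passing to Poincar\'e series and their limits produces the required scissor identity for $\SE$. I expect the principal obstacle to be the comparison of volume Poincar\'e series across two generically smooth formal models of the same rigid variety, which rests on the change-of-variables machinery of Loeser-Sebag and Nicaise; once this is secured, both the scissor relations and the identification $\SE([\mathfrak X_\eta]) = S(\mathfrak X_\eta,\widehat{K^s})$ become formal consequences of additivity and of the $\mathscr M_\ka$-linearity of $\lim_{T\to\infty}$ on rational series.
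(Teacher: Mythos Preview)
Your proposal is correct and follows essentially the same route as the paper: define $\SE([X])$ via the volume Poincar\'e series attached to a generically smooth formal model and a bounded gauge form, invoke \cite[Cor.~7.13]{Ni2} for rationality, and deduce additivity from that of $\Phi$ (Proposition~\ref{gsr}) together with the $\mathscr M_\ka$-linearity of $\lim_{T\to\infty}$. The paper's proof is far terser—it simply writes down the defining formula and cites these two inputs—whereas you spell out the well-definedness checks (independence of $\omega$, of the model, and of the quasi-compact $Y$) and the inclusion--exclusion argument that the paper leaves implicit.
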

\begin{proof}
The morphism $\SE$ is defined by 
$$\SE([X])=-\lim_{T\to \infty}\sum_{m\geq 1}\Phi\Big([X(m),\omega(m)]\Big)T^m,$$
where $\omega$ is an $\mathfrak X$-bounded gauge form on $X$ with $\mathfrak X$ a formal $R$-model of $X$. This generically smooth model $\mathfrak X$ can be chosen to be a special formal $R$-scheme, thus the limit of the series on the right exists (induced from \cite[Cor. 7.13]{Ni2}). That $\SE$ is a morphism of groups follows from the property of $\Phi$.
\end{proof}

\subsection{Resolution of singularities}\label{mncs}
In what follows, we shall work over $K=\ka((t))$ and $R=\ka[[t]]$. The notions of motivic nearby cycles and motivic Milnor fiber were introduced from the beginning without explaining why they are well defined. We shall discuss this problem in the present subsection.

Let $\mathfrak X$ be a generically smooth special formal $R$-scheme of relative dimension $d$, and let $\mathfrak h: \mathfrak Y \to \mathfrak X$ be a resolution of singularities of $(\mathfrak X,\mathfrak X_0)$ (the definition and the existence are referred to \cite{Tem}, or \cite{Ni2}). Assume that the divisor $\mathfrak Y_s$ is written as $\sum_{i\in J}N_i\mathfrak E_i$, where $\mathfrak E_i$, with $i\in J$, are the irreducible components of $\mathfrak Y_s=\mathfrak h^{-1}(\mathfrak X_s)$. Denoting $E_i=(\mathfrak E_i)_0$ for $i\in J$, for nonempty $I\subset J$, one puts 
$$E_I:=\bigcap_{i\in I}E_i,\quad E_I^{\circ}:=E_I\setminus\bigcup_{j\not\in I}E_j.$$ 
Let $\{U\}$ be a covering of $\mathfrak Y$ by affine open subschemes such that $U\cap E_I^{\circ}\not=\emptyset$, and on this set, $\mathfrak f\circ\mathfrak h= \tilde{u}\prod_{i\in I}y_i^{N_i}$, with $\tilde{u}$ a unit on $U$ and $y_i$ a local coordinate defining $E_i$. Set $m_I:=\gcd(N_i)_{i\in I}$. Similarly as in \cite{DL5}, there is an unramified Galois covering $\pi_I:\tilde{E}_I^{\circ}\to E_I^{\circ}$ with Galois group $\mu_{m_I}$ given over $U\cap E_I^{\circ}$ by 
$$\left\{(z,y)\in \mathbb{A}_{\ka}^1\times(U\cap E_I^{\circ}) \mid z^{m_I}=\tilde{u}(y)^{-1}\right\}.$$
The covering is endowed with a natural $\mu_{m_I}$-action good over $E_I^{\circ}$ obtained by multiplying the $z$-coordinate with elements of $\mu_{m_I}$, thus the $\hat{\mu}$-equivariant morphism $\mathfrak h\circ\pi_I$ defines a class $[\tilde{E}_I^{\circ}]$ in $\mathscr{M}_{\mathfrak X_0}^{\hat{\mu}}$. 

The following is based on Lemma \ref{jussieu17may} and inspired by \cite{KS}.

\begin{definition}\label{minhy}
One defines the {\it motivic nearby cycles} $\mathcal S_{\mathfrak f}$ of the formal function $\mathfrak f$ as an element of $\mathscr M_{\mathfrak X_0}^{\hat{\mu}}$ by 
\begin{align*}
\mathcal S_{\mathfrak f}:=\sum_{\emptyset\not=I\subset J} (1-[\mathbb A_{\mathfrak X_0}^1])^{|I|-1}[\tilde{E_I^{\circ}}].
\end{align*}
\end{definition}

\begin{lemma}\label{jussieu17may}
$\mathcal S_{\mathfrak f}$ is independent of the resolution of singularities $\mathfrak h$. 
\end{lemma}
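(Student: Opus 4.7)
The plan is to reduce the resolution-independence of $\mathcal S_{\mathfrak f}$ to the manifestly intrinsic nature of Nicaise's motivic volume from Subsection \ref{tetdenroi}. The key observation is that the volume Poincar\'e series $S(\mathfrak X,\omega;T)$, and hence its limit $S(\mathfrak X,\widehat{K^s})$, is defined through motivic integration on $\mathfrak X$ and its ramified base changes $\mathfrak X(m)$; consequently it depends only on $\mathfrak X$ itself (and on the uniformizer $t$, which is fixed here). Nicaise \cite[Cor.~7.13]{Ni2} gives an explicit formula for $S(\mathfrak X,\widehat{K^s})$ in terms of any chosen resolution $\mathfrak h:\mathfrak Y\to\mathfrak X$. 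Since the left-hand side is intrinsic, the particular resolution-dependent right-hand side expression must yield the same answer for any two choices of resolution.

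The first step is therefore to match Definition~\ref{minhy} with Nicaise's resolution formula. Concretely, given $\mathfrak h:\mathfrak Y\to\mathfrak X$ with $\mathfrak Y_s=\sum N_i\mathfrak E_i$, the integrals $\int_{\mathfrak X(m)}|\omega(m)|$ can be computed by pulling back along $\mathfrak h$ and applying the change of variables in motivic integration. Summing the resulting stratification over $\emptyset\neq I\subset J$ and taking $-\lim_{T\to\infty}$ produces, up to the normalization factor $[\mathbb A^1_{\mathfrak X_0}]^{-(d-1)}$, exactly the alternating sum $\sum(1-[\mathbb A^1_{\mathfrak X_0}])^{|I|-1}[E_I^{\circ}]$. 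In this way one recovers the non-equivariant shadow of the formula in Definition~\ref{minhy}.

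To upgrade to a $\hat\mu$-equivariant statement, I would enrich each term of the volume Poincar\'e series by the natural $\mu_m$-action on $\mathfrak X(m)$ coming from the Galois action on $R(m)=R[T]/(T^m-t)$, so that the series becomes an element of $\mathscr M^{\hat\mu}_{\mathfrak X_0}[[T]]$. The motivic integration of Nicaise, applied to a N\'eron smoothening $\mathfrak Y\to\mathfrak X$ compatible with this base change, respects the $\mu_m$-action, and on the strata of a resolution the $\mu_m$-equivariant Greenberg scheme produces precisely the unramified $\mu_{m_I}$-Galois covers $\tilde E_I^{\circ}\to E_I^{\circ}$ cut out by $z^{m_I}=\tilde u(y)^{-1}$ on each chart. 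Taking $-\lim_{T\to\infty}$ yields the equivariant class $[\mathbb A^1_{\mathfrak X_0}]^{d-1}\mathcal S_{\mathfrak f}$. The resolution-independence of $\mathcal S_{\mathfrak f}$ then follows from the fact that the equivariant volume Poincar\'e series was built from a motivic integral on $\mathfrak X(m)$, without reference to any resolution.

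The main obstacle is carefully propagating the $\hat\mu$-action through Nicaise's motivic integration machinery: one must verify that N\'eron smoothenings can be chosen $\mu_m$-equivariantly, that the change-of-variables formula for $\mathfrak h:\mathfrak Y(m)\to\mathfrak X(m)$ preserves the $\mu_m$-orbit structure, and that the Galois covers $\tilde E_I^{\circ}$ extracted from the Greenberg scheme of $\mathfrak Y(m)$ coincide with those introduced before Definition~\ref{minhy}. As a fallback, if this equivariant enhancement of \cite[Cor.~7.13]{Ni2} is not available directly, one can instead invoke a weak factorization argument: any two resolutions of $(\mathfrak X,\mathfrak X_0)$ are dominated by a common third resolution obtained via blow-ups along smooth centers having simple normal crossings with $\mathfrak Y_s$, and one then checks directly that each such blow-up leaves the right-hand side of Definition~\ref{minhy} invariant. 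This is the classical Denef--Loeser strategy \cite{DL1}, adapted to the special formal setting using Temkin's resolution theorem \cite{Tem}.
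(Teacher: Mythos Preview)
Your approach is essentially identical to the paper's: both reduce to the intrinsic, resolution-free definition of the volume Poincar\'e series $S(\mathfrak X,\omega;T)$ via Nicaise's motivic integration, match its $-\lim_{T\to\infty}$ against the resolution formula of \cite[Thm.~7.12, Cor.~7.13]{Ni2}, and then upgrade to the $\hat\mu$-equivariant setting by placing the natural $\mu_m$-action on the Greenberg scheme of a N\'eron smoothening of $\mathfrak X(m)$ (the paper invokes \cite[Thm.~3.2]{DL5} at exactly this step, where you flag the equivariance of the smoothening as an obstacle). The paper does not use your weak-factorization fallback, and note a small slip: the normalization factor is $[\mathbb A^1_{\mathfrak X_0}]^{-d}$ (with $d$ the relative dimension of $\mathfrak X$ in this section), not $[\mathbb A^1_{\mathfrak X_0}]^{-(d-1)}$.
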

\begin{proof}
This is true without $\hat{\mu}$-action because 
$S(\mathfrak X,\widehat{K^s})=[\mathbb A_{\mathfrak X_0}^1]^{-d}\mathcal S_{\mathfrak f}$ in $\mathscr M_{\mathfrak X_0}$, due to Definition \ref{minhy} and \cite[Prop. 7.36]{Ni2}. Studying the volume Poincar\'e series (\ref{eq103}), we assume in addition that $\omega$ is $\mathfrak X$-bounded (cf. \cite[Def. 2.11]{Ni2}). Then by \cite[Thm. 7.12]{Ni2}, there exist natural numbers $\alpha_i\geq 1$, $i\in J$, such that $\int_{\mathfrak X(m)}|\omega(m)|$ is equal in $\mathscr M_{\mathfrak X_0}$ to
\begin{equation}\label{demdong}
[\mathbb A_{\mathfrak X_0}^1]^{-d}\sum_{\emptyset\not=I\subset J}([\mathbb A_{\mathfrak X_0}^1]-1)^{|I|-1}[\tilde{E}_I^{\circ}]\left(\sum_{\begin{smallmatrix} k_i\geq 1, i\in I\\ \sum_{i\in I}k_iN_i=m \end{smallmatrix}}[\mathbb A_{\mathfrak X_0}^1]^{-\sum_{i\in I}k_i\alpha_i}\right)
\end{equation}
In what follows we are going to define a canonical $\mu_m$-action by which $\int_{\mathfrak X(m)}|\omega(m)|$ belongs to $\mathscr M_{\mathfrak X_0}^{\hat{\mu}}$. Let $\mathfrak W_m\to \mathfrak X(m)$ be a N\'eron smoothening for $\mathfrak X(m)$ such that $\mathfrak W_m$ is a separated generically smooth formal $\ka[[t^{1/m}]]$-scheme topologically of finite type. Then, one has 
$$\int_{\mathfrak X(m)}|\omega(m)|=\int_{\mathfrak W_m}|\omega(m)|=\int_{\Gr(\mathfrak W_m)}[\mathbb A_{\mathfrak X_0}^1]^{-\ord_{t^{1/m}, \mathfrak W_m}(\omega(m))}d\tilde{\mu}.$$
The canonical $\mu_m$-action on $\Gr(\mathfrak W_m)$ is defined as follows: $a\varphi(t^{1/m})=\varphi(at^{1/m})$. By this, in the spirit of \cite[Thm. 3.2]{DL5}, $\int_{\mathfrak X(m)}|\omega(m)|$ must be equal to (\ref{demdong}) in $\mathscr M_{\mathfrak X_0}^{\hat{\mu}}$, hence (\ref{demdong}) is independent of the choice of $h$. It thus implies that the quantity $S(\mathfrak X,\widehat{K^s})=[\mathbb A_{\mathfrak X_0}^1]^{-d}\mathcal S_{\mathfrak f}$ is independent of the choice of $h$ when considered as an element of $\mathscr M_{\mathfrak X_0}^{\hat{\mu}}$.
\end{proof}

Let $\int_{\mathfrak X_0}$ be the forgetful morphism $\mathscr M_{\mathfrak X_0}\to \mathscr M_{\ka}$. Lemma \ref{jussieu17may} and its proof, together with Proposition \ref{cor3}, provide the below important corollary. 
\begin{corollary}\label{sosanh}
If $\mathfrak X$ is a generically smooth special formal $R$-scheme of relative dimension $d$, then $\SE([\mathfrak X_{\eta}])=[\mathbb A_{\ka}^1]^{-d}\int_{\mathfrak X_0}\mathcal S_{\mathfrak f}$ in $\mathscr M_{\kappa}^{\hat{\mu}}$. As a consequence, $\SE$ is also a morphism of groups of {\rm $K(\BSRig_K)$} into {\rm $\mathscr M_{\ka}^{\hat{\mu}}$}.
\end{corollary}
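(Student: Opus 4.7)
The plan is to obtain both assertions as immediate consequences of a refinement of Lemma~\ref{jussieu17may}, together with the definitions of $\SE$ and $\Phi$. First, I would revisit the proof of Lemma~\ref{jussieu17may}: by choosing a N\'eron smoothening $\mathfrak W_m\to\mathfrak X(m)$ and endowing $\Gr(\mathfrak W_m)$ with the canonical $\mu_m$-action $a\cdot\varphi(t^{1/m})=\varphi(at^{1/m})$, the Denef--Loeser-type calculation (\ref{demdong}) computes $\int_{\mathfrak X(m)}|\omega(m)|$ as an element of $\mathscr M_{\mathfrak X_0}^{\hat{\mu}}$, not merely of $\mathscr M_{\mathfrak X_0}$. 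Plugging these enriched classes into the volume Poincar\'e series (\ref{eq103}) and passing to $-\lim_{T\to\infty}$ therefore upgrades the equality $S(\mathfrak X,\widehat{K^s})=[\mathbb A_{\mathfrak X_0}^1]^{-d}\mathcal S_{\mathfrak f}$ from $\mathscr M_{\mathfrak X_0}$ to $\mathscr M_{\mathfrak X_0}^{\hat{\mu}}$.

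Next, I would apply the forgetful morphism $\int_{\mathfrak X_0}\colon\mathscr M_{\mathfrak X_0}^{\hat{\mu}}\to\mathscr M_{\ka}^{\hat{\mu}}$ to both sides of this refined identity. By the construction of $\SE$ in Proposition~\ref{cor3}, the image of $S(\mathfrak X,\widehat{K^s})$ is precisely $\SE([\mathfrak X_{\eta}])$ viewed in $\mathscr M_{\ka}^{\hat{\mu}}$; since $[\mathbb A_{\mathfrak X_0}^1]$ is pulled back from $[\mathbb A_{\ka}^1]$, the image of the right-hand side is $[\mathbb A_{\ka}^1]^{-d}\int_{\mathfrak X_0}\mathcal S_{\mathfrak f}$. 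This yields the main identity.

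For the consequence, I would use that every bounded smooth rigid $K$-variety admits a generically smooth special formal $R$-model, so $K(\BSRig_K)$ is generated by classes of the form $[\mathfrak X_{\eta}]$. For each such generator the identity just proved provides a canonical element of $\mathscr M_{\ka}^{\hat{\mu}}$ whose image in $\mathscr M_{\ka}$ agrees with $\SE([\mathfrak X_{\eta}])$ as defined in Proposition~\ref{cor3}. Compatibility with isomorphisms and admissible coverings in $K(\BSRig_K)$ is inherited from the forgetful $\SE$ together with the fact that the $\mu_m$-actions on the relevant Greenberg schemes are functorial in the rigid variety, and additivity is inherited from that of $\Phi$.

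The main technical point requiring care is the model-independence of the lift: two distinct generically smooth special formal models of the same bounded smooth rigid $K$-variety must produce the same class in $\mathscr M_{\ka}^{\hat{\mu}}$, not only the same image in $\mathscr M_{\ka}$. As in the forgetful situation handled in Proposition~\ref{cor3}, this reduces to the observation that $\Gr(\mathfrak W_m)$ together with its canonical $\mu_m$-action depends only on $\mathfrak X_{\eta}(m)$, because any N\'eron smoothening captures the same unramified $K(m)$-points with the same action of $\mathrm{Gal}(K(m)/K)\cong\mu_m$, so the lifting recipe factors through the rigid generic fiber.
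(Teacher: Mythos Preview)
Your proposal is correct and follows essentially the same route the paper intends: the paper gives no separate proof for Corollary~\ref{sosanh}, stating only that ``Lemma~\ref{jussieu17may} and its proof, together with Proposition~\ref{cor3}, provide the below important corollary,'' and your argument is precisely an unpacking of that sentence. Your additional discussion of model-independence in $\mathscr M_{\ka}^{\hat{\mu}}$ makes explicit a point the paper leaves tacit, but the mechanism you invoke (the canonical Galois $\mu_m$-action on the Greenberg scheme of a N\'eron smoothening depends only on $\mathfrak X_{\eta}(m)$) is exactly what the proof of Lemma~\ref{jussieu17may} sets up.
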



\section{The case of an algebraically closed field}\label{sec66}
Let us remark that the theory $\ACVF(0,0)$ is valuable for the rigid varieties in the present article. 

\subsection{Generalized measured categories}
In this section, we are interested in volume forms that are more general than those in Section \ref{prepare1}, and \cite{HK, HK2} are still main references. Indeed, first of all, we consider the category $\mu_{\Gamma}\VF[*]$ of $A$-definable sets with definable volume forms up to $\Gamma$-equivalence. Notice that, for simplicity, we shall write from now on ``definable'' to mean ``$A$-definable'' when $A$ is already clear. Let $n$ be a positive natural number. An object in $\mu_{\Gamma}\VF[n]$ is a triple $(X,f,\alpha)$ with $X$ a definable subset of $\VF^k\times \RV^{\ell}$, for some $k,\ell\geq 0$, $f:X\to \VF^n$ a definable map with finite fibers and $\alpha:X\to \Gamma$ a definable function; a morphism $(X,f,\alpha)\to (X',f',\alpha')$ of its is a definable essential bijection $F:X\to X'$ such that for almost every $x\in X$, 
$$\alpha(x) =\alpha'(F(x))+\val(\Jac F(x)).$$ 
Let $\mu_{\Gamma}\VF^{\bdd}[n]$ be the full subcategory of $\mu_{\Gamma}\VF[n]$ such that its objects are bounded definable sets with bounded definable forms $\alpha$. As previous, we can define $\mu_{\Gamma}\VF[*]=\bigoplus_{n\geq 0}\mu_{\Gamma}\VF[n]$ and $\mu_{\Gamma}\VF^{\bdd}[*]=\bigoplus_{n\geq 0}\mu_{\Gamma}\VF^{\bdd}[n]$. In particular, the category $\vol\VF[*]$ (resp. $\vol\VF^{\bdd}[*]$) in Section \ref{prepare1} is the full subcategory of $\mu_{\Gamma}\VF[*]$ (resp. $\mu_{\Gamma}\VF^{\bdd}[*]$) consisting of the objects of the form $(X,f,0)$.

Also following \cite{HK}, objects of $\mu_{\Gamma}\RV[n]$ are triples $(X,f,\alpha)$ with $X$ a definable subset of $\RV^{k}$, for some $k\geq 0$, $f:X\to\RV^n$ a definable map with finite fibers, and $\alpha: X\to \Gamma$ a definable function. A morphism $(X,f,\alpha)\to (X',f',\alpha')$ is a definable bijection $F:X\to X'$ such that for all $x\in X$, 
$$\alpha(x)+\sum_{i=1}^n\val_{\rv}f_i(x)= \alpha'(F(x))+\sum_{i=1}^n\val_{\rv}f'_i(F(x)).$$ 
The category $\mu_{\Gamma}\RES[n]$ is by definition the full subcategory of $\mu_{\Gamma}\RV[n]$ such that, for each object $(X,f,\alpha)$, we have that $\val_{\rv}(X)$ is a finite set. The category $\mu_{\Gamma}\RV^{\bdd}[n]$ is defined in the same way as previous. For simplicity, we sometimes omit the notation of structural map $f$ in the triple $(X,f,\alpha)$ when it is already clear. Define $\mu_{\Gamma}\RV[*]=\bigoplus_{n\geq 0}\mu_{\Gamma}\RV[n]$ and $\mu_{\Gamma}\RV^{\bdd}[*]=\bigoplus_{n\geq 0}\mu_{\Gamma}\RV^{\bdd}[n]$.

By \cite{HK}, the category $\mu\Gamma[n]$ consists of objects which are pairs $(\Delta,l)$, where $\Delta\in\Ob\Gamma[n]$ and $l:\Delta\to\Gamma$ is a definable map. A morphism $(\Delta,l)\to (\Delta',l')$ of its is a definable bijection $h:\Delta\to\Delta'$ which is liftable to a definable bijection $\val_{\rv}^{-1}\Delta\to \val_{\rv}^{-1}\Delta'$ such that 
$$|x|+l(x)=|h(x)|+l'(h(x)).$$ In particular, $\vol\Gamma[n]$ is the full subcategory of $\mu\Gamma[n]$ defined as above with $l=0$. The category $\mu\Gamma^{\bdd}[n]$ is the full subcategory of $\mu\Gamma[n]$ such that, for each $\Delta$ in $\Ob\mu\Gamma^{\bdd}[n]$, there exists a $\gamma\in\Gamma$ with $\Delta\subset [\gamma,\infty)^n$. The category $\mu\Gamma[*]$ (resp. $\mu\Gamma^{\bdd}[*]$) is the direct sum $\bigoplus_{n\geq 1}\mu\Gamma[n]$ (resp. $\bigoplus_{n\geq 1}\mu\Gamma^{\bdd}[n]$).


\subsection{Morphisms between Grothendieck rings}
The lifting map 
$$\L: \Ob\mu_{\Gamma}\RV[n]\to\Ob\mu_{\Gamma}\VF[n]$$ 
is defined as follows. For any $(X,f,\alpha)$ in $\Ob\mu_{\Gamma}\RV[n]$, we put 
$$\L(X,f,\alpha)=(\L X,\L f,\L\alpha),$$ 
where $\L X=X\times_{f,\rv}(\VF^{\times})^n$, $\L f(a,b)=f(a,\rv(b))$ and $\L\alpha (a,b)=\alpha(a,\rv(b))$. This map induces a canonical morphism of semi-rings  
$$\int: K_+(\mu_{\Gamma}\VF^{\bdd}[n])\to K_+(\mu_{\Gamma}\RV^{\bdd}[n])/I'_{\sp},$$
where $I'_{\sp}$ is the congruence generated by $[1]_1=[\RV^{>0}]_1$ with the constant volume form $0$ in $\Gamma$. We also write $\int$ for the induced morphism between the corresponding rings.

Let $\mu\Gamma^{\fin}[*]$ be the full subcategory of $\mu\Gamma[*]$ whose objects are finite sets. There exists a natural map of Grothendieck semi-rings 
$$K_+(\mu_{\Gamma}\RES)\otimes_{\mu\Gamma^{\fin}[*]} K_+\mu\Gamma^{\bdd}[*])\to K_+(\mu_{\Gamma}\RV^{\bdd}[*]).
$$
Namely, it is built from the morphisms $K_+(\mu_{\Gamma}\RES)\to K_+(\mu_{\Gamma}\RV^{\bdd}[*])$ (induced by the inclusion) and $K_+\mu\Gamma^{\bdd}[*])\to K_+(\mu_{\Gamma}\RV^{\bdd}[*])$ (defined by $\Delta\mapsto \val_{\rv}^{-1}(\Delta)$). By \cite[Prop. 10.10]{HK}, this natural morphism is an isomorphism of rings. Thus an element of $K_+(\mu_{\Gamma}\RV^{\bdd}[*])$ can be written as a finite sum 
$$\sum [(X\times\val_{\rv}^{-1}(\Delta),f,\alpha)].$$
An argument in the proof of \cite[Prop. 10.10]{HK} also shows that 
$$[(X\times\val_{\rv}^{-1}(\Delta),f,\alpha)]=[(X,f_0,1)]\otimes [(\Delta,l)],$$
where $f_0:X\to \RV^n$ and $l:\Delta\to\Gamma$ are some definable functions. By this, in order to construct morphisms from an appropriate subring of $K(\mu_{\Gamma}\RV^{\bdd}[*])$ to $!K(\RES)[[\mathbb A_{\ka}^1]^{-1}]_{\loc}$ similarly as in Subsection \ref{hm2012}, it suffices to define $a_m(\Delta,l)$ and $b_m(X,f,1)$.

Let $\mu_{\Gamma}^G\RV[*]$ be the full subcategory of $\mu_{\Gamma}\RV^{\bdd}[*]$ consisting of objects $(X,f,\alpha)$ such that $\alpha(X)$ is two-sided bounded in $\Gamma$. Clearly, this subcategory contains $\vol\RV^{\bdd}$ since we can take $\alpha:X\to\Gamma$ to be the constant function $0$. Let $\mu^G\Gamma[*]$ be the full subcategory of $\mu\Gamma^{\bdd}[*]$ whose objects $(\Delta,l)$ have the property that $l(\Delta)$ is two-sided bounded in $\Gamma$. There is in the same way as (\ref{eq7}) a canonical morphism of semi-rings
$$K_+(\mu_{\Gamma}\RES[*])\otimes K_+(\mu^G\Gamma[*])\to K_+(\mu_{\Gamma}^G\RV[*])$$
whose kernel is generated by $[\val_{\rv}^{-1}(\gamma)]_1\otimes 1 - 1\otimes [\gamma]_1$, with $\gamma$ definable in $\Gamma$. Let $m\in\mathbb N_{>0}$, $e\in \Gamma$ and $(\Delta,l)\in \Ob\mu^G\Gamma[n]$. We define $\Delta(m):=\Delta\cap(1/m \mathbb Z)^n$ and $\Delta_e:=l^{-1}(e)$. Put 
\begin{align}\label{0eq0001}
\tilde a_m(\Delta,l)=\sum_{e\in \Gamma}\sum_{\gamma\in \Delta_e(m)}[\mathbb A_{\ka}^1]^{-m(|\gamma|+e)}([\mathbb A_{\ka}^1]-1)^n,
\end{align}
where each $e$-term is equal to zero if $me\not\in\mathbb N$. Since $l(\Delta)$ is two-sided bounded in $\Gamma$, the index $e$ of the previous sum runs over a finite set. Thus, similarly as in Subsection \ref{hm2012}, this quantity $\tilde a_m(\Delta,l)$ lives in $!K(\RES)[[\mathbb A_{\ka}^1]^{-1}]_{\loc}$. Moreover, we are able to prove that $\tilde a_m$ does not depend on coordinates of $\Gamma^n$. Indeed, by definition of $\mu\Gamma^{\bdd}$, if $h$ is a morphism from $(\Delta_e,l|_{\Delta_e})$ to some $(\Delta',l')$, then
$$|h(\gamma)|+l'(h(\gamma))=|\gamma|+l(\gamma)=|\gamma|+e.$$ 
Now, rewriting (\ref{0eq0001}) as
\begin{align*}
\tilde a_m(\Delta,l)=\sum_{e\in\mathbb N}\sum_{\gamma\in \Delta_e(m)}[\mathbb A_{\ka}^1]^{-m|\gamma|-e}([\mathbb A_{\ka}^1]-1)^n
\end{align*}
and putting
$$\tilde a_{m,e}(\Delta,l)=\sum_{\gamma\in \Delta_e(m)}[\mathbb A_{\ka}^1]^{-m|\gamma|}([\mathbb A_{\ka}^1]-1)^n,$$ 
we have 
$$\tilde a_m(\Delta,l)=\sum_{e\in\mathbb N}\tilde a_{m,e}(\Delta_e,l)[\mathbb A_{\ka}^1]^{-e}.$$

Let $(X,f,1)$ be an object of the category $\mu_{\Gamma}\RES$ with $f(X)\subset V_{\gamma_1}\times\cdots\times V_{\gamma_n}$ (thus, $\val_{\rv}(f_i(x))=\gamma_i$ for every $x\in X$). As in \cite[Subsec. 8.2]{HL}, we define 
$$\tilde b_m(X,f,1)=[X]\left(\frac{[1]_1}{[\mathbb A_{\ka}^1]}\right)^{m|\gamma|}$$
if $m\gamma\in\mathbb Z^n$ and $\tilde b_m(X,f,1)=0$ otherwise. Similarly as in Subsection \ref{hm2012}, the tensor products $\tilde b_m\otimes \tilde a_{m,e}$ and $\tilde b_m\otimes \tilde a_m$ respectively induce morphisms $\tilde h_{m,e}$ and $\tilde h_m$,
$$\tilde h_{m,e}, \tilde h_m: K(\mu_{\Gamma}^G\RV[*])/I'_{\sp}\to !K(\RES)[[\mathbb A_{\ka}^1]^{-1}]_{\loc},$$
such that
$$\tilde h_m([(X,f,\alpha)]+I'_{\sp})=\sum_{e\in\mathbb N}\tilde h_{m,e}([(\alpha^{-1}(e),f)]+I'_{\sp})[\mathbb A_{\ka}^1]^{-e}.$$

Let $X$ be a bounded smooth rigid $\ka((t))^{\alg}$-variety endowed with a gauge form $\omega$. Then one can view $(X,\omega)$ as an object $(X,\alpha)$ of the category $\mu_{\Gamma}\VF^{\bdd}[*]$, where $\alpha=\val\circ\omega$. Here, by convention, an object $(X,f,\alpha)$ of $\mu_{\Gamma}\VF^{\bdd}[*]$ can be simply written as $(X,\alpha)$ if $f$ is clear. It is also a fact that the image of $[(X,\alpha)]$ under $\int$ belongs to $K(\mu_{\Gamma}^G\RV[*])/I'_{\sp}$. We use the morphisms $\tilde{\Theta}$, $\Upsilon$ and $\hl$ defined in Section \ref{prepare10}; even $\tilde{\Theta}\circ \tilde h_{m,e}\circ\int$ was already mentioned in the same section, which we now denote by $\hl_{m,e}$. Putting
\begin{align*}
\hl_m([(X,\alpha)]):=(\tilde{\Theta}\circ \tilde h_m)(\int[(X,\alpha)]),
\end{align*}
we obtain a morphism of rings $\hl_m: K(\GBSRig_{\ka((t))^{\alg}})\to \mathscr M_{\ka,\loc}^{\hat{\mu}}$.

\subsection{A comparison result}\label{star}
\begin{theorem}\label{comp}
Let $X$ be a bounded smooth rigid $k((t))^{\alg}$-variety with formal model affine of pure relative dimension $d$, and let $\omega$ be a gauge form on $X$. Assume in addition that, by viewing $X$ as an object $(X,\alpha)$ of $\mu_{\Gamma}\VF^{\bdd}[*]$ with $\alpha=\val\circ\omega$, $X$ satisfies the hypotheses of Proposition \ref{eq010}. Then, the following equalities
$$\loc\left(\Phi([X(m),\omega(m)])\right)=[\mathbb A_{\ka}^1]^{-d}\hl_m([X,\val\circ\omega])$$
and
$$\loc\left(\SE([X])\right)=[\mathbb A_{\ka}^1]^{-d}\hl([X])$$
hold in $\mathscr M_{\ka,\loc}^{\hat{\mu}}$.
\end{theorem}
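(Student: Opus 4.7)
The plan is to prove identity (i) by unfolding both sides into comparable weighted sums indexed by the integer values of a common order function, and then deduce identity (ii) from (i) by multiplying by $T^m$, summing over $m\geq 1$, and applying the $\mathscr M_{\ka,\loc}^{\hat\mu}$-linear map $-\lim_{T\to\infty}$, using Proposition \ref{eq010}(ii) and Proposition \ref{cor3}.

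For the left-hand side of (i), fix an affine generically smooth special formal $\ka[[t]]$-model $\mathfrak X$ of $X$ and, for each $m\geq 1$, a N\'eron smoothening $\mathfrak W_m\to\mathfrak X(m)$ chosen as in the proof of Lemma \ref{jussieu17may}. By the definition recalled in Section \ref{sec6} together with Proposition \ref{stmande},
$$\Phi\big([X(m),\omega(m)]\big)=\int_{\mathfrak W_m}|\omega(m)| = \sum_{e\geq 0}\tilde\mu\Big(\big\{\ord_{t^{1/m},\mathfrak W_m}\omega(m)=e\big\}\Big)[\mathbb A_\ka^1]^{-e},$$
each level set carrying the canonical $\mu_m$-action $a\cdot\varphi(t^{1/m})=\varphi(at^{1/m})$ so that the whole expression lives in $\mathscr M_{\ka}^{\hat\mu}$. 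For the right-hand side of (i), use the isomorphism furnished by \cite[Prop.~10.10]{HK} to expand
$$\hl_m([X,\val\circ\omega]) = \sum_{e\geq 0}\hl_{m,e}\big([\alpha^{-1}(e)]\big)[\mathbb A_\ka^1]^{-e},$$
where $\alpha=\val\circ\omega$. Each level set $\alpha^{-1}(e)$ inherits from $X$ the boundedness, definability, and $\beta$-invariance required by Proposition \ref{eq010}, so the proposition applied level-wise yields $\hl_{m,e}([\alpha^{-1}(e)]) = \loc\big(\widetilde{\alpha^{-1}(e)}[m]\big)$, where the right-hand side is, by definition, the rescaled class $[\alpha^{-1}(e)[m;\beta]][\mathbb A_\ka^1]^{-m|\beta|+d}$ for $\beta$ sufficiently large.

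The main technical step, and the expected obstacle, is the identification, for each $e$ and for suitably correlated parameters $N$ and $\beta$, of the $\mu_m$-equivariant Greenberg truncation of $\{\ord_{t^{1/m},\mathfrak W_m}\omega(m)=e\}$ with the definable truncation $\alpha^{-1}(e)[m;\beta]$ as a $\ka$-variety with good $\mu_m$-action. Smoothness of $\mathfrak W_m$ provides local coordinates in which $\omega(m)$ has the form $u\,dy_1\wedge\cdots\wedge dy_d$ with $u$ a monomial in $t^{1/m}$ times a unit, and in such charts the arc-theoretic order function matches $m\cdot(\val\circ\omega)$ on the common set of $\ka((t^{1/m}))$-points; here one must track that the Galois $\mu_m$-action on the arc side coincides with the $\hat\mu$-action transported through $\tilde\Theta$ on the definable side. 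The prefactor $[\mathbb A_\ka^1]^{-d}$ in the stated identity exactly reconciles the normalization $[\mathbb A^1]^{-(N+1)d}$ from Proposition \ref{stmande} with the normalization $[\mathbb A^1]^{-m|\beta|+d}$ implicit in $\widetilde{(-)}[m]$, once $N$ and $\beta$ are related by the stability level of the cylinder. Granting (i), identity (ii) follows immediately: summing against $T^m$ and applying $-\lim_{T\to\infty}$, the left-hand side yields $\loc(\SE([X]))$ by Proposition \ref{cor3} and the definition of the motivic volume, while the right-hand side yields $[\mathbb A_\ka^1]^{-d}\hl([X])$ by Proposition \ref{eq010}(ii).
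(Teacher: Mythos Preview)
Your approach matches the paper's: decompose $\hl_m([(X,\alpha)])$ along the level sets $\alpha^{-1}(e)$, apply Proposition~\ref{eq010}(i) to each piece, and identify the resulting truncation class $[\alpha^{-1}(e)[m;\beta']][\mathbb A_\ka^1]^{-md\beta'+d}$ with $[\mathbb A_\ka^1]^{d}\,\tilde\mu(\alpha^{-1}(e)(m))$ so that the $e$-sum collapses to $[\mathbb A_\ka^1]^{d}\loc(\Phi([X(m),\omega(m)]))$. The paper treats your ``main technical step'' in one line, as a direct unwinding of the two normalizations; your detour through explicit N\'eron smoothenings and local charts is not wrong, but it is more than is needed.

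There is, however, a genuine gap in your derivation of the second identity. Proposition~\ref{eq010}(ii) is stated and proved for the morphisms $\hl_m,\hl$ on $K(\vol\VF^{\bdd}[*])$, i.e.\ for objects with trivial form $\alpha=0$. The morphism $\hl_m$ appearing in the first identity is the extended one on $K(\mu_\Gamma^G\RV[*])/I'_{\sp}$ from Section~\ref{sec66}, applied to $(X,\alpha)$ with $\alpha=\val\circ\omega$ possibly nonzero; and the $\hl$ in the target identity is, by the convention fixed just before the theorem, the $\vol$-version applied to the bare class $[X]$. So summing identity (i) and taking $-\lim_{T\to\infty}$ only gives
\[
\loc(\SE([X]))=-[\mathbb A_\ka^1]^{-d}\lim_{T\to\infty}\sum_{m\ge1}\hl_m([(X,\alpha)])T^m,
\]
and Proposition~\ref{eq010}(ii) does not directly identify the right-hand limit with $\hl([X])$. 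The paper closes this gap by observing that the left-hand side is independent of $\omega$ (by definition of $\SE$), hence so is the right-hand limit; one may therefore replace $\omega$ by a gauge form for which $\alpha=0$, after which Proposition~\ref{eq010}(ii) applies verbatim and yields $\hl([X])$. You should insert this $\omega$-independence step before invoking Proposition~\ref{eq010}(ii).
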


\begin{proof}
By definition, we can view $\alpha^{-1}(e)$ as an object of $\vol\VF^{\bdd}[*]$, which also satisfies Proposition \ref{eq010}. By the construction of $\tilde h_m$, one has 
$$\hl_m([(X,\alpha)])=\sum_{e\in\mathbb N}\hl_{m,e}([(\alpha^{-1}(e))])[\mathbb A_{\ka}^1]^{-e}.$$
We deduce from \cite[Lem. 3.1.1]{HL} that, for each $e\in\mathbb N$, there exists a $\beta_e\in\Gamma$ such that $\alpha^{-1}(e)$ is $\beta_e$-invariant. Applying Proposition \ref{eq010} to $\hl_{m,e}$, $e\in\mathbb N$, 
\begin{align}\label{aaa}
\hl_{m,e}([\alpha^{-1}(e)])=[\widetilde{\alpha^{-1}(e)}[m]]=[\alpha^{-1}(e)[m,\beta']][\mathbb A_{\ka}^1]^{-md\beta'+d}
\end{align}
for any $\beta'\geq \beta_e$ in $\Gamma$. Let $\tilde{\mu}$ denote the motivic measure which takes values in $\mathscr M_{\ka}$. Then, the RHS of (\ref{aaa}) is nothing but $\loc\left([\mathbb A_{\ka}^1]^d\tilde{\mu}(\alpha^{-1}(e)(m))\right)$, and we have
\begin{align*}
\hl_m([(X,\alpha)])&=\loc\left([\mathbb A_{\ka}^1]^d\sum_{e\in\mathbb N}\tilde{\mu}(\alpha^{-1}(e)(m))[\mathbb A_{\ka}^1]^{-e}\right)\\
&=\loc\left([\mathbb A_{\ka}^1]^d\Phi([X(m),\omega(m)])\right)
\end{align*}
in the ring $\mathscr M_{\ka,\loc}$. Remark that, according to the proof of Lemma \ref{jussieu17may}, the previous identity also holds in $\mathscr M_{\ka,\loc}^{\hat{\mu}}$. Thus we are able to deduce the following
\begin{align*}
\sum_{m\geq 1}\hl_m([(X,\alpha)])T^m=[\mathbb A_{\ka}^1]^d\sum_{m\geq 1}\loc\left(\Phi([X(m),\omega(m)])\right)T^m
\end{align*}
in $\mathscr M_{\ka,\loc}^{\hat{\mu}}[[T]]$. By this, the rationality of the series $\sum_{m\geq 1}\hl_m([(X,\alpha)])T^m$ in $\mathscr M_{\ka,\loc}^{\hat{\mu}}[[T]]$ follows from that of the Poincar\'e power series, and the limit $\lim_{T\to\infty}$ lives in $\mathscr M_{\ka,\loc}^{\hat{\mu}}$. Since 
$$-\SE([X])=\lim_{T\to\infty}\sum_{m\geq 1}\Phi([X(m),\omega(m)])T^m$$
does not depend on $\omega$, so does $\lim_{T\to\infty}\sum_{m\geq 1}\hl_m([(X,\alpha)])T^m$. Hence, we can choose a gauge form $\omega$ on $X$ so that $\alpha=0$, and by Proposition \ref{eq010},  
$$-\hl([X])=\lim_{T\to\infty}\sum_{m\geq 1}\hl_m([(X,\alpha)])T^m$$
as desired.
\end{proof}

Let $X^{\star}$ be a line bundle over a bounded smooth rigid $\ka((t))^{\alg}$-variety $X$. The below corollary will be used in Subsection \ref{x1} for the following rigid $\ka((t))^{\alg}$-varieties
\begin{align}\label{xxx1}
X_1=\left\{(x,y,z)\in\VF^d\
\begin{array}{|l}
\val(x)\geq 0, \val(y)>0, \val(z)>0\\
x\not=0 \ \text{and}\ y\not=0, f(x,y,z)=t
\end{array}
\right\}
\end{align}
and
\begin{align*}
X_1^{\star}=\left\{(x,y,z)\in\VF^d\ 
\begin{array}{|l}
\val(x)\geq 0, \val(y)>0, \val(z)>0\\
x\not=0 \ \text{and}\ y\not=0, \rv(f(x,y,z))=\rv(t)
\end{array}
\right\},
\end{align*}
where $f$ is the formal series given in Theorem \ref{full}.

\begin{corollary}\label{comp11}
With $X$ as in Theorem \ref{comp}, 
$$\loc\left(\SE([X])\right)=[\mathbb A_{\ka}^1]^{-d}\hl([X^{\star}])$$ 
in $\mathscr M_{\ka}^{\hat{\mu}}$.
\end{corollary}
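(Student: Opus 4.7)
The plan is to derive the corollary from Theorem \ref{comp} via the identification
\begin{equation*}
\hl([(X,\val\circ\omega)]) = \hl([X^{\star}])
\end{equation*}
in $\mathscr M_{\ka,\loc}^{\hat{\mu}}$, valid for any choice of gauge form $\omega$ on $X$. Once this is established, the corollary follows at once: Theorem \ref{comp} already supplies $\loc(\SE([X])) = [\mathbb A_{\ka}^1]^{-d}\hl([(X,\val\circ\omega)])$, and substitution gives the claim. Observe that the right-hand side of the corollary does not depend on the choice of $\omega$, which is consistent with the fact that $X^{\star}$ is intrinsically attached to $X$ (no form is fixed on it), so the identification itself must hold for every admissible $\omega$.

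To prove this identification, I would exploit the line bundle structure $\pi:X^{\star}\to X$ together with the trivialization given by the gauge form $\omega$. Locally on $X$, the form $\omega$ identifies the fiber $\pi^{-1}(x)$ with a definable disc of valuative radius $\val(\omega(x))$, so $X^{\star}$ presents itself set-theoretically as a pullback over $X$ of such a disc. A Fubini-type decomposition in the Hrushovski--Kazhdan integration morphism
\begin{equation*}
\int:K(\vol\VF^{\bdd}[*])\to K(\vol\RV^{\bdd}[*])/I_{\sp}'
\end{equation*}
then allows one to compute $\int[X^{\star}]$ as an integral over $X$ of the class of the disc-fiber, which via the $\L$-lift reduces to a class of the form $[\val_{\rv}^{-1}(\val\omega(x))]$. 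The identical reasoning applied to $[(X,\val\circ\omega)]$ through the morphism $\int:K(\mu_\Gamma\VF^{\bdd}[*])\to K(\mu_\Gamma\RV^{\bdd}[*])/I_{\sp}'$ produces the same integrand, because the $\mu_\Gamma$-weight $\val\circ\omega$ is by design what one obtains by integrating out a disc-fiber of varying radius. Hence $\int[X^{\star}]=\int[(X,\val\circ\omega)]$ in the common target ring (after the natural forgetful inclusion $\vol\subset\mu_\Gamma$), and applying $\tilde{\Theta}\circ\Upsilon$ yields the required identity.

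The main obstacle will be making this fiberwise identification canonical and independent of local trivialization, and verifying that the Fubini-type step commutes with the morphism $\int$ in the sense needed here. For the concrete example of interest, namely $(X_1,X_1^{\star})$ in Subsection \ref{x1}, the desired trivialization is given explicitly by the definable map $(x,y,z)\mapsto f(x,y,z)/t\in 1+\mathfrak{m}$, which realizes $X_1^{\star}$ as a $(1+\mathfrak{m})$-bundle over $X_1$ with radius structure encoded by $\omega$; in this case the computation is essentially direct. For a general bounded smooth rigid variety $X$, the identity is then obtained by patching through an admissible covering of $X$ and invoking additivity of $\hl$, combined with the well-definedness already established in Theorem \ref{comp}.
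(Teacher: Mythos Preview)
Your reduction to Theorem~\ref{comp} is right, but both the reading of that theorem and the subsequent Fubini step miss the mark.

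The second identity of Theorem~\ref{comp} already gives $\loc(\SE([X])) = [\mathbb A_{\ka}^1]^{-d}\hl([X])$, with no $\omega$ on the right; the form is eliminated in the last paragraph of that proof by setting $\alpha=0$. So what remains is simply $\hl([X]) = \hl([X^\star])$, and this is exactly what the paper proves in one line: observe $[X^\star]=[X][\mathfrak m]$ in $K(\vol\VF^{\bdd}[*])$, use that $\hl$ is a ring morphism, and compute $\hl([\mathfrak m])=1$ (immediate from Proposition~\ref{eq010}, or from formula~(\ref{openball}) combined with Theorem~\ref{comp} applied to~$\mathfrak m$ with $d=1$). Your intermediate quantity $\hl([(X,\val\circ\omega)])$ is, by that same passage of Theorem~\ref{comp}, already equal to $\hl([X])$, so introducing it is a detour.

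The Fubini argument is where the real gap lies. The fibers of a line bundle $\pi:X^\star\to X$ are open discs with \emph{no intrinsic valuative radius}; a gauge form $\omega$ on the base $X$ does not measure them, so the claim that ``$\omega$ identifies $\pi^{-1}(x)$ with a disc of radius $\val(\omega(x))$'' has no content. Concretely, your map $(x,y,z)\mapsto f(x,y,z)/t$ lands in $1+\mathfrak m$, not in $X_1$: it exhibits $X_1^\star$ as a family \emph{over} $1+\mathfrak m\cong\mathfrak m$ whose fibers $\{f=ct\}$ are each isomorphic to $X_1$. That is precisely the factorization $[X_1^\star]=[X_1][\mathfrak m]$ the paper uses, and $\omega$ never appears in it.
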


\begin{proof}
Note that $[X^{\star}]=[X][\mathfrak m]$; so, the statement follows from Theorem \ref{comp} and from the fact that $\SE([\mathfrak m])=1$ (see the formula (\ref{openball}) of Subsection \ref{x0}).
\end{proof}


\section{Proof of the formal version (Theorem \ref{full})}\label{sec7}
In this section, $K$ is $\ka((t))$ and $R$ is $\ka[[t]]$ with $\ka$ a field of characteristic zero.

\subsection{Remark}
It is worthy to notice that, thanks to Theorem \ref{comp} and to Corollary \ref{comp11}, which express the link between the morphisms $\SE$ and $\hl$, we are able to deduce Theorem \ref{full} from the proof of Theorem \ref{conj} given in Section \ref{proofthm1}. 

However, we shall give an alternative computation on the morphism $\SE$ in Subsections \ref{s8.2} and \ref{x0}. The reason is that below arguments are also valid  for every valued field of equal characteristic zero, not necessarily algebraically closed, and that the setting is in $\mathscr M_{\ka}^{\hat{\mu}}$. Thus, in the case where we could compute $\SE([X_1])$ with $X_1$ in (\ref{xxx1}), without the condition that $\ka$ is an algebraically closed field, then Conjecture \ref{conj1} would be answered in the most general version.

\subsection{The left hand side}\label{s8.2}
As in Section \ref{introduction}, let $f(x,y,z)$ be a formal power series in $\ka[[x,y,z]]$, where $(x,y,z)$ is a system of coordinates of $\ka^d=\ka^{d_1}\times\ka^{d_2}\times\ka^{d_3}$, satisfying the hypotheses 
\begin{align}\label{eq000}
\begin{cases}
f(\tau x,\tau^{-1}y,z)=f(x,y,z),\ \tau\in \ka^{\times}\\
f(0,0,0)=0.
\end{cases}
\end{align}
Let $\mathfrak X$ be the formal completion of $\mathbb A_{\ka}^d$ along $\mathbb A_{\ka}^{d_1}$ with structural morphism $f_{\mathfrak X}$ induced by $f$ (see Conjecture \ref{conj1}). Then, $\mathfrak X$ is a generically smooth special formal $R$-scheme of relative dimension $d-1$, and $\mathfrak X_0=\mathbb A_{\ka}^{d_1}$. Let $\mathfrak X_{\eta}$ be the generic fiber of $\mathfrak X$. By Corollary \ref{sosanh} we have the following identity
$$\SE([\mathfrak X_{\eta}])=[\mathbb A_{\ka}^1]^{-d+1}\int_{\mathbb A_{\ka}^{d_1}}\mathcal S_{f_{\mathfrak X}},$$
and hence
\begin{align}\label{equation1}
\int_{\mathbb A_{\ka}^{d_1}}\mathcal S_{f_{\mathfrak X}}=[\mathbb A_{\ka}^1]^{d-1}\SE([\mathfrak X_{\eta}]),
\end{align}
both live in the ring $\mathscr M_{\ka}^{\hat{\mu}}$. Note that $\mathfrak X_{\eta}$ is of the form
\begin{align*}
\mathfrak X_{\eta}=\left\{(x,y,z)\in\mathbb A_{\widehat{K^s},\Rig}^d\
\begin{array}{|l}
\val(x)\geq 0, \val(y)>0, \val(z)>0\\
f(x,y,z)=t
\end{array}
\right\},
\end{align*}
where $\val(x)$ stands for $\min_i\{\val(x_i)\}$. Now, write $\mathfrak X_{\eta}$ as disjoint union of definable subsets 
$$X_0=\left\{(x,y,z)\in \mathfrak X_{\eta} \mid x=0\ \text{or}\ y=0\right\}$$ 
and 
$$X_1=\left\{(x,y,z)\in \mathfrak X_{\eta} \mid x\not=0\ \text{and}\ y\not=0\right\}.$$ 


\subsection{The right hand side}\label{x0}
By the homogeneity (\ref{eq000}), whenever $x=0$ or $y=0$, $f(x,y,z)=f(0,0,z)$. Thus, we can decompose $X_0$ into cartesian product $Y_0\times Z_0$, where
$$Y_0=\left\{(x,y)\in \mathbb A_{\widehat{K^s},\Rig}^{d_1+d_2} \mid \val(x)\geq 0, \val(y)>0, x=0\ \text{or}\ y=0\right\}$$
and
$$Z_0=\left\{z\in\mathbb A_{\widehat{K^s},\Rig}^{d_3} \mid \val(z)>0, f(0,0,z)=t\right\}.$$
Moreover, $Y_0$ is a disjoint union of subsets 
$$Y_{0,1}=\left\{x\in \mathbb A_{\widehat{K^s},\Rig}^{d_1} \mid 0\leq \val(x)<\infty\right\}$$
and 
$$Y_{0,2}=\left\{y\in \mathbb A_{\widehat{K^s},\Rig}^{d_2} \mid \val(y)>0\right\},$$ 
therefore
$$X_0=\Big(Y_{0,1}\times Z_0\Big)\sqcup \Big(Y_{0,2}\times Z_0\Big).$$

Now, by Proposition \ref{cor3}, 
\begin{align}\label{eq203}
\SE([Y_{0,1}\times Z_0])=-\lim_{T\to \infty}\sum_{m\geq 1}\Phi\Big([Y_{0,1}(m)\times Z_0(m),dx\times\omega(m)]\Big)T^m,
\end{align}
where $\omega$ is an appropriate gauge form on $Z_0$ and $dx$ denotes $dx_1\wedge\dots\wedge dx_{d_1}$. By Proposition \ref{gsr}, 
$$\Phi\Big([Y_{0,1}(m)\times Z_0(m),dx\times\omega(m)]\Big)=\Phi\Big([Y_{0,1}(m),dx(m)]\cdot\Phi\Big([Z_0(m),\omega(m)]\Big).$$
Note that $\Phi\Big([Y_{0,1}(m),dx(m)]=0$, because
$$Y_{0,1}(m)=\left\{x\in \mathbb A_{K(m),\Rig}^{d_1} \mid \val(x)\geq 0\right\}\setminus\{0\},$$ 
and $\Phi\Big([\{x\in \mathbb A_{K(m),\Rig}^{d_1} \mid \val(x)\geq 0\},dx(m)]\Big)=1$ and $\Phi\Big([\{0\},1]\Big)=1$. It thus follows from (\ref{eq203}) that 
\begin{equation}\label{eq204}
\SE([Y_{0,1}\times Z_0])=0.
\end{equation}

Again, by Proposition \ref{cor3}, we have 
$$
\SE([Y_{0,2}\times Z_0])=-\lim_{T\to \infty}\sum_{m\geq 1}\Phi\Big([Y_{0,2}(m)\times Z_0(m),d'x\times\omega(m)]\Big)T^m,$$
where $\omega$ is an appropriate gauge form on $Z_0$ and $d'x$ stands for $dx_1\wedge\dots\wedge dx_{d_2}$. Since $\Phi$ is a morphism of rings due to Proposition \ref{gsr}, 
$$\Phi\Big([Y_{0,2}(m)\times Z_0(m),d'x\times\omega(m)]\Big)=\Phi\Big([Y_{0,2}(m),d'x(m)]\cdot\Phi\Big([Z_0(m),\omega(m)]\Big).$$
A simple computation gives 
\begin{align}\label{openball}
\Phi\Big([Y_{0,2}(m),d'x(m)]\Big)=[\mathbb A_{\ka}^1]^{-d_2}
\end{align}
for any $m\geq 1$. This follows that
$$\SE([Y_{0,2}\times Z_0])=[\mathbb A_{\ka}^1]^{-d_2}\SE([Z_0]);$$
and, moreover, this identity also holds in $\mathscr M_{\ka}^{\hat{\mu}}$, by Corollary \ref{sosanh}.

Now, let us show how $Z_0$ concerns the RHS of the integral identity. As denoted in Conjecture \ref{conj1}, $\mathfrak Z$ is the formal completion of $\mathbb A_{\ka}^{d_3}$ at the origin with structural morphism $f_{\mathfrak Z}$ induced by $f(0,0,z)$; hence it has the relative dimension $d_3-1$. Furthermore, $Z_0$ is nothing but the generic fiber $\mathfrak Z_{\eta}$. By this and by Corollary \ref{sosanh} we have
$$\SE([Z_0])=[\mathbb A_{\ka}^1]^{-d_3+1}\mathcal S_{f_{\mathfrak Z}},$$
thus 
\begin{equation}\label{eq205}
\SE([Y_{0,2}\times Z_0])=[\mathbb A_{\ka}^1]^{-d_2-d_3+1}\mathcal S_{f_{\mathfrak Z}},
\end{equation}
which hold in  $\mathscr M_{\ka}^{\hat{\mu}}$. Therefore, we deduce from (\ref{eq204}) and (\ref{eq205}) that
\begin{align}\label{eq206}
[\mathbb A_{\ka}^1]^{d_1}\mathcal S_{f_{\mathfrak Z}}=[\mathbb A_{\ka}^1]^{d-1}\SE([X_0])
\end{align}
in $\mathscr M_{\ka}^{\hat{\mu}}$.


\subsection{Conclusion}\label{x1}
At the moment, the field $\ka$ is assumed to be algebraically closed. We consider the theory $\ACVF(0,0)$ with base structure $\ka((t))$, this theory is valid for rigid $\widehat{K^s}$-varieties. After (\ref{equation1}) and (\ref{eq206}), we want to prove, in the present context, that $\loc\left(\SE([X_1])\right)=0$ in $\mathscr M_{\ka,\loc}^{\hat{\mu}}$; and thanks to Corollary \ref{comp11}, it suffices to verify that $\hl([X_1^{\star}])=0$. The latter was already realized in Subsection \ref{bbb}. Theorem \ref{full} has completely proved.

\begin{remark}
By the above computation, proving Conjecture \ref{conj} in its full version is equivalent to showing $\SE([X_1])=0$ in $\mathscr M_{\ka}^{\hat{\mu}}$.
\end{remark}

\begin{ack}
I am grateful to my advisor F. Loeser for various supports and encouragements. Mathematically, his work joint with E. Hrushovski on the Lefschetz fixed point formula \cite{HL} is crucial for proofs in the present article, and the proof of Lemma \ref{cachan6} is under his idea. I would like to thank J. Nicaise, J. Sebag and N. Budur for many valuable comments to this work. Finally, the contributions of the referees to the article are important and I sincerely thank them for those.
 
This work is partially supported by the European Research Council under the European Community's Seventh Framework Programme (FP7/2007-2013) / ERC Grant Agreement $\text{n}^{\circ}$ 246903/NMNAG.
\end{ack}


\end{document}